\DeclareFontFamily{U}{mathx}{\hyphenchar\font45}
\DeclareFontShape{U}{mathx}{m}{n}{
      <5> <6> <7> <8> <9> <10>
      <10.95> <12> <14.4> <17.28> <20.74> <24.88>
      mathx10
      }{}
\newcommand{\nn}[1]{{\vert\kern-0.25ex\vert\kern-0.25ex\vert #1 
    \vert\kern-0.25ex\vert\kern-0.25ex\vert}}
\DeclareMathOperator{\supp}{supp}
\newtheorem{theorem}{Theorem}
\newtheorem{lemma}[theorem]{Lemma}
\theoremstyle{remark}
\newtheorem*{remark*}{Remark}
\theoremstyle{definition}
\newtheorem{definition}[theorem]{Definition}
\numberwithin{equation}{section}
\newcounter{maintheorem}
\newtheorem{mainth}[maintheorem]{Theorem}
\newtheorem*{mainthprime*}{Theorem A$^\prime$}
\renewcommand{\leq}{\leqslant}
\renewcommand{\geq}{\geqslant}
\newcounter{smallromans}
\newcommand{\sgn}{\text{sgn }}
\newcounter{smallromansdash}
\newcounter{bigromans} 
  {\end{list}}
\begin{document}

\baselineskip=17pt

\title{On isometries and Tingley's problem for the spaces $T\left[\theta, \mathcal{S}_{\alpha}\right], 1 \leq \alpha<\omega_{1}$}
\author{Natalia Ma\'slany}
\address{ Jagiellonian University, Doctoral School of Exact and Natural Sciences, Faculty of Mathematics and Computer Science, Institute of Mathematics, {\L}ojasiewicza 6, 30-348 Krak\'ow, Poland and Institute of Mathematics, Czech Academy of Sciences, \v{Z}itn\'{a} 25, 115~67 Prague 1, Czech Republic}
\email{nataliamaslany97@gmail.com}
\thanks{The author was supported by GAČR grant GF20-22230L; RVO 67985840 and received an incentive scholarship from the funds of the program Excellence Initiative - Research University at the Jagiellonian University in Krak\'{o}w.}

\begin{abstract}
     We extend the existing results on surjective isometries of unit spheres in the Tsirelson space $T\left[\frac{1}{2}, \mathcal{S}_1\right]$ to the class $T[\theta, \mathcal{S}_{\alpha}]$ for any integer $\theta^{-1} \geq 2$ and $1 \leqslant \alpha < \omega_1$, where $\mathcal{S}_{\alpha}$ denotes the Schreier family of order $\alpha$. This positively answers Tingley's problem for these spaces, which asks whether every surjective isometry between unit spheres can be extended to a surjective linear isometry of the entire space.

    Furthermore, we improve the result stating that every linear isometry on $T[\theta, \mathcal{S}_1]$ \linebreak\big($\theta \in \left(0, \frac{1}{2}\right]$\big) is determined by a permutation of the first $\lceil \theta^{-1} \rceil$ elements of the canonical unit basis, followed by a possible sign change of the corresponding coordinates and \linebreak a sign change of the remaining coordinates. Specifically, we prove that only the first $\lfloor \theta^{-1} \rfloor$ elements can be permuted. This finding enables us to establish a sufficient condition for being a linear isometry in these spaces.
\end{abstract}

\subjclass[2020]{46B04, 46B25, 46B45} 
\keywords{combinatorial spaces, combinatorial Tsirelson spaces, higher-order Schreier families, isometry group, regular families, Schreier families, Tingley’s problem}

\maketitle
\section{Introduction and the main result}

In 1987, Tingley \cite{tingley1987isometries} proposed a question that has since become known as Tingley's problem:

\emph{
Let $X$ and $Y$ be normed spaces with unit spheres $\mathbb{S}_{X}$ and $\mathbb{S}_{Y}$, respectively. Suppose that $U\colon \mathbb{S}_{X} \rightarrow \mathbb{S}_{Y}$ is a surjective isometry. Is there a linear isometry $\tilde{U}\colon X \rightarrow Y$ such that $\tilde{U} \vert_{\mathbb{S}_{X}} = U$?}

Many authors have shown that Tingley's problem has a positive solution for surjective isometries of unit spheres in classical Banach spaces $\ell_p(\Gamma)$, $L_p(\mu)$ $(1 \leq p \leq \infty)$, and $C(\Omega)$ (see, \emph{e.g.}, \cite{ding20021, ding2003isometric, ding2004representation, dding2004representation, ding2007isometric, ding2008isometric, fang2006extension, liu2007extension, liu2009extension, tan2009nonexpansive, tan2012extension, jian2004extension, yang2006extension}). However, the general case remains open. Notable results in the search for a solution to Tingley's problem in specific spaces have been comprehensively documented in surveys by A. M. Peralta \cite{peralta2018survey}, G. G. Ding \cite{ding2009isometric}, X. Yang, and X. Zhao \cite{yang2014extension}. Recently, a positive solution to this isometric expansion problem has been found for 2-dimensional Banach spaces (see \cite{banakh2022every}); nevertheless, the answer remains unknown for higher dimensions. Positive solutions for certain subspaces of function algebras, including closed function algebras on locally compact Hausdorff spaces, have been presented in more recent studies (see \cite{cueto2022exploring}).

The Tsirelson space $T$ (the dual of the space constructed by Tsirelson \cite{tsirelson1974impossible}, which was the first example of a space containing no isomorphic copies of $c_0$ or $\ell_p$ for $1\leq p < \infty$) can be regarded as a special case of the double-parameter family of Banach spaces $T[\theta, \mathcal{S}_{\alpha}]$, where $\theta \in \left(0, \frac{1}{2}\right]$ and $1 \leqslant \alpha < \omega_1$, with $\mathcal{S}_{\alpha}$ being the Schreier family of order $\alpha$, where $\alpha$ is a countable ordinal.

In \cite{maslany2023isometries} we have characterized linear isometries of combinatorial Tsirelson spaces. However, the methods employed assume linearity of the isometries throughout the entire space. We improve the main theorem from this article by proving the following first main result:

\begin{mainth}\label{Th:A}
    Let $\theta \in \big(0,\frac{1}{2} \big]$. Then $U\colon T\big[\theta, \mathcal{S}_1\big]\to T\big[\theta, \mathcal{S}_1\big]$ is a linear isometry if and only if
    \[
        U e_i = \left\{\begin{array}{ll} \varepsilon_i e_{\pi(i)}, &  1\leq i\leq \lfloor \theta^{-1}  \rfloor\\
        \varepsilon_i e_i, & i > \lfloor \theta^{-1}  \rfloor
        \end{array} \right. \quad(i\in \mathbb N)
    \]
    for some $\{-1,1\}$-valued sequence $(\varepsilon_i)_{i=1}^\infty$ and a permutation $\pi$ of $\big\{1,2,\ldots, \lfloor \theta^{-1}  \rfloor\big\}.$
\end{mainth}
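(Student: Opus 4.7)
The plan is to prove both directions of the iff separately. Set $M = \lfloor \theta^{-1}\rfloor$. For the \emph{only if} direction, the earlier result from \cite{maslany2023isometries} already gives that any linear isometry $U$ has the form $Ue_i = \varepsilon_i e_{\pi(i)}$ for $i \leq \lceil \theta^{-1}\rceil$ and $Ue_i = \varepsilon_i e_i$ for $i > \lceil \theta^{-1}\rceil$, for some permutation $\pi$ of $\{1,\ldots,\lceil \theta^{-1}\rceil\}$. If $\theta^{-1} \in \N$ the two ceilings coincide and there is nothing further to prove. Otherwise $\theta M < 1 < \theta(M+1)$, and the extra step is to show $\pi$ must fix $M+1$.

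The witness is $x = \sum_{i=M+1}^{2M+1} e_i$. The decomposition $(\{M+1\},\ldots,\{2M+1\})$ is $\mathcal{S}_1$-admissible, since it consists of $M+1$ singletons with minimum $M+1$, so $\|x\| \geq \theta(M+1) > 1$. Assume for contradiction that $\pi(M+1) = k$ with $k \leq M$. Then $Ux$ is supported on $\{k\}\cup\{M+2,\ldots,2M+1\}$, whose minimum is $k \leq M$. Any $\mathcal{S}_1$-admissible sequence $(E_j)$ relevant to the norm of $Ux$ satisfies the following dichotomy: either $E_1 \ni k$, in which case $n \leq k \leq M$; or all the $E_j$ are contained in $\{M+2,\ldots\}$, in which case at most $M$ of them hit the remaining support of size $M$. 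A level-by-level estimate on the standard approximants $\|\cdot\|_\ell$ of the norm (noting that each piece $E_j(Ux)$ lives in a support set of the same type, so that inductively $\|E_j(Ux)\|_{\ell-1} \leq 1$) then gives $\|Ux\|_\ell \leq \max(1,\theta M) = 1$, hence $\|Ux\| = 1 < \|x\|$, contradicting that $U$ is an isometry. Therefore $\pi(M+1) = M+1$.

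For the \emph{if} direction, write $U = DV$ with $D$ the diagonal sign change (an isometry by the $1$-unconditionality of the canonical basis, which follows directly from the implicit norm equation by an easy induction on $\ell$) and $V$ the permutation $Ve_i = e_{\pi(i)}$ for $i \leq M$, $Ve_i = e_i$ for $i > M$. The plan is to prove inductively on $\ell$ that $\|Vx\|_\ell = \|x\|_\ell$ for all $x$. The base case is immediate, and in the inductive step one uses $\|E_i(Vx)\|_{\ell-1} = \|\sigma^{-1}(E_i)(x)\|_{\ell-1}$ to reduce to matching each $\mathcal{S}_1$-admissible $(E_1,\ldots,E_n)$ for $Vx$ with an admissible sequence for $x$ of equal total sum. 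If $\min E_1 > M$ the sets are fixed by $\sigma^{-1}$, and this case is automatic. The main obstacle is the complementary case $\min E_1 \leq M$: the preimages $\sigma^{-1}(E_i)$ need not be strictly ordered because the permutation can interleave subsets of $\{1,\ldots,M\}$, so a bare transfer of the decomposition fails. I expect to resolve this by exploiting the rigid bound $n \leq M$ forced by admissibility whenever a piece meets $\{1,\ldots,M\}$: this restricts the low parts of the $E_i$ to finitely many configurations, and permits a rearrangement of the low parts (while keeping the high parts fixed) into a genuine $\mathcal{S}_1$-admissible sequence for $x$ of the same total sum, completing the induction.
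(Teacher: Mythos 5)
Your \emph{only if} direction is correct and is essentially the paper's argument in mirror image. The paper assumes $\pi(i)=\lceil\theta^{-1}\rceil$ for some $i\leq\lfloor\theta^{-1}\rfloor$ and applies $U$ to the norm-one vector $e_i-\sum_{k=1}^{\lceil\theta^{-1}\rceil-1}e_{j_k}$ (with $j_k>\lceil\theta^{-1}\rceil$), whose image has $\lceil\theta^{-1}\rceil$ unimodular coordinates all sitting at positions $\geq\lceil\theta^{-1}\rceil$ and hence norm $\theta\lceil\theta^{-1}\rceil>1$; you instead push $e_{M+1}$ down to a position $k\leq M$ and watch the norm of $\sum_{i=M+1}^{2M+1}e_i$ drop from $\theta(M+1)>1$ to $1$. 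Both arguments rest on the same dichotomy for a $\{0,\pm1\}$-vector with $\lceil\theta^{-1}\rceil$ nonzero entries: norm $>1$ if the support starts at $\lceil\theta^{-1}\rceil$ or later, norm $1$ if it starts at or below $\lfloor\theta^{-1}\rfloor$. Your level-by-level verification that $\|Ux\|=1$ is sound.

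The \emph{if} direction is where you have a genuine gap, and you flag it yourself ("I expect to resolve this by\dots"). The paper does not prove this direction either --- it defers entirely to \cite[Theorem 4.1]{antunes2022surjective} --- but your proposed repair of the problematic case $\min E_1\leq M$ does not work as stated. Two concrete obstructions: first, the permuted low parts $\pi^{-1}(E_j\cap[1,M])$ of consecutive sets $E_1<\cdots<E_n$ can genuinely interleave (e.g.\ $\{1,3\}$ and $\{2\}$), so no reordering of the \emph{sets} produces a consecutive family, and redistributing \emph{elements} between sets changes the summands $\|E_j(Vx)\|$ uncontrollably; second, even if the low parts could be reordered, $\pi^{-1}$ can strictly decrease the minimum of the family, so the transferred family may violate the admissibility constraint $n\leq\min$ (e.g.\ $n=\min E_1=m$ but $\pi^{-1}(m)=1$). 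A workable route in this case is not to transfer the decomposition at all but to estimate directly: observe that any $\{E_j\}$ with $\min E_1\leq M$ has $n\leq M$ pieces, that every piece contained in $[1,M]$ has norm equal to its supremum norm (any further admissible split of a vector supported on $[1,M]$ loses a factor $\theta\cdot\min(m,M-m+1)<1$), and then compare against a decomposition of $x$ built from singletons in $[1,M]$ together with the unchanged high pieces. As written, however, the inductive step of your argument is not closed, so the \emph{if} direction remains unproven; you could instead do what the paper does and invoke the cited result.
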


Then, following the approach of \cite{tan2012isometries}, where the author determine the surjective isometries of the unit spheres of Tsirelson space $T[\frac{1}{2}, \mathcal{S}_1]$ and the modified Tsirelson space $T_M$ and answer Tingley’s problem affirmatively in these spaces, we establish the subsequent main Theorem.

\begin{mainth}\label{Th:B}
    Let $\theta^{-1} \geq 2$ be an integer and let $U\colon \mathbb{S}_{T[\theta, \mathcal{S}_{\alpha}]} \to \mathbb{S}_{T[\theta, \mathcal{S}_{\alpha}]}$ be surjective isometry. 
     If $\alpha = 1$, then 
   \[
    U \bigg( \sum_{i=1}^{\infty} a_i e_i \bigg) = \sum_{i=1}^{ \theta^{-1}  } \varepsilon_i a_i e_{\pi(i)} + \sum_{i= \theta^{-1} + 1}^{\infty} \varepsilon_i a_i e_i
    \]
    and if  $1< \alpha < \omega_1$, then 
    \[
         U \bigg(\sum_{i=1}^{\infty} a_i e_i\bigg) = 
        \sum_{i=1}^{\infty} \varepsilon_i a_i e_i,
    \]
    for every $\sum_{i=1}^{\infty} a_i e_i \in \mathbb{S}_{T[\theta, \mathcal{S}_{\alpha}]}$, where $(\varepsilon_i)_{i=1}^\infty$ is a $\{-1,1\}$-valued sequence and $\pi$ is a~permutation of $\big\{1,2,\ldots,  \theta^{-1} \big\}.$
\end{mainth}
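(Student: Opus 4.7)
The plan is to follow Tan's approach from \cite{tan2012isometries} for $T[\tfrac12,\mathcal{S}_1]$, adapted to the full parameter range. The strategy is to extend $U$ to a linear isometry $\widetilde U\colon T[\theta,\mathcal{S}_\alpha]\to T[\theta,\mathcal{S}_\alpha]$; once this is achieved, Theorem~A (for $\alpha=1$) and the characterization of linear isometries established in \cite{maslany2023isometries} (for $\alpha>1$) produce exactly the two formulas displayed in the statement.

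First I would prove that $U$ is odd, $U(-x)=-U(x)$ for every $x\in\mathbb{S}_{T[\theta,\mathcal{S}_\alpha]}$. The device is to characterize $-x$ intrinsically among sphere points at distance $2$ from $x$ by an additional metric condition in the spirit of Mazur--Ulam; since $U$ preserves every distance, it must preserve this characterization. One then defines $\widetilde U(x):=\|x\|\,U(x/\|x\|)$ for $x\neq 0$ and $\widetilde U(0):=0$, so that $\widetilde U$ is homogeneous over $\mathbb{R}$, and the task reduces to proving additivity.

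The core step is to pin down the action of $\widetilde U$ on the canonical basis. Here one seeks an intrinsic (metric or face-theoretic) description of $\pm e_i$ inside $\mathbb{S}_{T[\theta,\mathcal{S}_\alpha]}$, typically via the maximal faces supported by the biorthogonal functionals $e_i^*$ and the admissible families coming from $\mathcal{S}_\alpha$. The description has to be fine enough to detect which indices may be interchanged: for $\alpha=1$ the combinatorics of $\mathcal{S}_1$ single out the first $\theta^{-1}$ coordinates as mutually equivalent, yielding a permutation of $\{1,\ldots,\theta^{-1}\}$ exactly as in Theorem~A, whereas for $\alpha>1$ the larger admissible sets obstruct every nontrivial exchange and force the permutation to be the identity. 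Additivity of $\widetilde U$ on finitely supported vectors then follows from additivity on $\{e_i\}$ together with compatibility with the admissible projections that define the norm; a density/continuity argument extends it to the whole sphere, at which point Theorem~A (or its $\alpha>1$ analogue) finishes the job.

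The hard part will be the intrinsic identification of $\pm e_i$ from purely metric data on the sphere, and in the $\alpha=1$ case the proof that the permutation cannot reach indices beyond $\theta^{-1}$. Because the norm of $T[\theta,\mathcal{S}_\alpha]$ is defined by an implicit recursion involving the Schreier family, teasing out a metric invariant that distinguishes each $e_i$ from every other sphere point, and is preserved by an \emph{arbitrary} surjective isometry of the sphere, is where the most delicate estimates should lie.
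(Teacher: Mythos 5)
Your overall architecture agrees with the paper's for the first half: both identify $\pm e_i$ by an intrinsic metric condition on the sphere (the paper's Lemma \ref{l3} characterizes $\pm e_1,\dots,\pm e_{\lfloor\theta^{-1}\rfloor}$ via $\min(\|u+y\|,\|u-y\|)\leq 1$ for all $y$, and the remaining basis vectors via the threshold $\theta(\lfloor\theta^{-1}\rfloor+1)$), both use an oddness lemma, and both must show the induced index map is the identity beyond the first block. However, your proposal has a genuine gap at its core step. You write that after determining $U$ on the basis, ``additivity of $\widetilde U$ on finitely supported vectors then follows from additivity on $\{e_i\}$ together with compatibility with the admissible projections,'' and then a density argument finishes. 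This is not an argument: $U$ is only given on the sphere, it is not known to be additive anywhere, and knowing $U(e_i)=\varepsilon_i e_{\pi(i)}$ for all $i$ does not by itself determine $U(\sum a_ie_i)$. Passing from the action on basis vectors to the action on an arbitrary sphere point is precisely the crux of Tingley's problem, and it is the step your proposal leaves unproved. The paper does \emph{not} extend $U$ homogeneously and prove additivity; instead it computes the coordinates of $U(x)$ directly. Writing $U(x)=\sum b_ie_i$, it builds for each $j$ the comparison vectors $y_j=\sum_{i\neq\hat\pi(j)}\theta b_ie_i-\varepsilon_j\,\mathrm{sgn}(a_j)e_{\hat\pi(j)}$ and $z_j$ (Lemma \ref{l8} shows these lie on the sphere and computes $\|U(x)-z_j\|=1+|b_{\hat\pi(j)}|$), uses the criterion $\|x+e_n\|=2\iff x(n)=1$ (Lemma \ref{l4}) to locate the $j$-th coordinate of $U^{-1}(y_j)$, and deduces $|b_{\hat\pi(j)}|\geq|a_j|$; the reverse inequality comes from running the same argument for $U^{-1}$. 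That two-sided squeeze, not additivity, is what forces $b_{\hat\pi(j)}=\varepsilon_ja_j$. Without a concrete substitute for this step your plan does not close.

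A secondary, smaller gap: for $1<\alpha<\omega_1$ you assert that ``the larger admissible sets obstruct every nontrivial exchange,'' but the actual proof that the induced permutation is the identity requires a careful combinatorial construction (the paper chooses indices $j^{n+1}_p>\max\{j^{n+1}_{p-1},\sigma(j^{n+1}_{p-1})\}$ assembling $t$ maximal $\mathcal S_\beta$-sets so that the $\sigma$-image fails to be an $\mathcal S_\alpha$-set by spreading), and for limit ordinals one additionally needs the cofinal sequence of successor ordinals with nested Schreier families fixed in Definition \ref{d3}. This part is an omission of substance rather than a wrong idea, but as stated it is not yet a proof.
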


This result together with Theorem \ref{Th:A} get an affirmative answer to the Tingley’s problem in combinatorial Tsirelson spaces $T\big[\theta, \mathcal{S}_{\alpha}\big]$ for an integer $\theta^{-1} \geq 2$ and $1\leqslant \alpha < \omega_1$.

\section{Preliminaries}

\subsection{Combinatorial spaces}
Let us denote by $(e_i)_{i=1}^\infty$ the standard unit vector basis of $c_{00}$ and by $[\mathbb{N}]^{<\omega}$ the family of finite subsets of $\mathbb{N}.$ We adopt the following notation for sets $F_1, F_2 \in [\mathbb{N}]^{<\omega}$: $F_1<F_2$ means that $\max F_1 < \min F_2$, and we say that $F_1$ and $F_2$ are \textit{consecutive} in this case. Additionally, we use the notation $F_1<n$ instead of $F_1 < \{n\}$ for $n\in \mathbb{N}$.

\begin{definition}\label{sp}
A family $\mathcal{F}\subset [\mathbb{N}]^{<\omega}$ is {\em regular}, whenever it is simultaneously
\begin{itemize}
		\item \textit{hereditary} \big($F \in \mathcal{F}$ and $G \subset F \implies G \in \mathcal{F}$\big);
		\item \textit{spreading} \big($\{l_1,l_2,\dots,l_n\} \in \mathcal{F}$ and $l_i \leqslant k_i \implies \{k_1,k_2,\dots,k_n\} \in \mathcal{F}$\big);
		\item \textit{compact} as a subset of the Cantor set $\{0,1\}^{\mathbb{N}}$ via the natural identification of $F \in \mathcal{F}$ with 
        \[
            \chi_F=\sum_{i \in F} e_i \in \{0,1\}^{\mathbb{N}}.
        \]
\end{itemize}
\end{definition}

The simplest examples of regular families include
\[
    \mathcal{A}_n := \big\{F\in[\mathbb{N}]^{<\omega} \colon |F|\leqslant n\big\} \quad(n\in \mathbb N)
\]

\emph{i.e.}, for a given $n\in \mathbb N$, the family of subsets of $\mathbb{N}$ having at most $n$ elements. The family of Schreier sets, introduced by Alspach and Argyros \cite{alspach1992complexity}, is defined using these families in the following manner.
\begin{definition}\label{d3}
 Given a countable ordinal $\alpha$, we define inductively the Schreier family of order $\alpha$  as follows:
 \begin{itemize}
     \item $\mathcal{S}_{0} := \mathcal A_1;$
     \item if $\alpha$ is a successor ordinal, \emph{i.e.}, $\alpha=\beta +1$ for some $\beta < \omega_1$, then 
        \[
            \mathcal{S}_{\alpha}:=\Bigg\{ \bigcup_{i=1}^{d} S^{i}_{\beta}\colon d \leq S^{1}_{\beta} < S^{2}_{\beta} < \cdots < S^{d}_{\beta},\,\, \big\{ S^{i}_{\beta} \big\}_{i=1}^d \subset \mathcal{S}_{\beta} \text{ and } d \in \mathbb{N} \Bigg\} \cup\big\{\emptyset\big\};
        \]
     \item if $\alpha$ is a non-zero limit ordinal and $\left(\alpha_{n}\right)_{n=1}^\infty$ is a fixed strictly increasing 
     sequence of successor ordinals converging to $\alpha$ with 
     $\mathcal S_{\beta_n} \subset \mathcal S_{\beta_{n+1}}$
     for all $n \in \mathbb{N}$, where $\alpha_n = \beta_n+1$ for all $n \in \mathbb{N}$, we set 
        \begin{equation*}
        \mathcal{S}_{\alpha} :=\big\{S_{\alpha_n} \in[\mathbb{N}]^{<\omega}\colon S_{\alpha_n} \in    \mathcal{S}_{\alpha_{n}},\,\, n \leq \min S_{\alpha_n} \text{ for some n} \big\} \cup\big\{\emptyset\big\}. 
        \end{equation*}
 \end{itemize}
\end{definition}

We emphasize that in the case where $\alpha$ is a limit ordinal, we require the sequence $(\alpha_n)_{n=1}^\infty$ cofinal in $\alpha$ to comprise successor ordinals as needed in the proof of Theorem \ref{Th:B}. We can assume, and we will, that $S_{\alpha_n} \subset S_{\alpha_{n+1}}$ for all $n \in \mathbb{N}$, which will also be employed in the proof of Theorem \ref{Th:B}. Indeed, repeating the proof of \cite[Proposition 3.2.]{causey2017concerning} in the case of Schreier families $\big\{ S_{\xi} \big\}_{\xi < \omega_1}$ which are multiplicative in the sense of \cite{causey2017concerning} we can also derive the desired result for Schreier families.
Elements belonging to $\mathcal S_{\alpha}$ are called $\mathcal S_{\alpha}$\emph{-sets}.
The fact that these families are regular is well-established; see \cite{causey2017concerning}[Proposition 3.2] or \cite{todorcevic2005ramsey}.

\subsection{Combinatorial Tsirelson spaces}
For a regular family $\mathcal{F}$ and $\theta\in \big(0,\frac{1}{2}\big]$, we define the Banach space $T[\theta,\mathcal{F}]$ specializing it later to space $T[\theta, \mathcal{S}_{\alpha}]$ for some countable ordinal $\alpha$.\smallskip

For a vector $x=(a_1, a_2, \ldots, a_n)\in c_{00}$ and a finite set $E\subset \mathbb N$, we employ the symbol $Ex$ to represent the projection of $x$ onto the space $[e_i \colon i \in E],$ given by
\begin{equation}
    E\Bigg(\sum_{i=1}^{n} a_i e_i\Bigg) = \sum_{i\in E} a_i e_i.
\end{equation}

We denote by $\|\cdot\|_0$ the supremum norm on $c_{00}.$ Suppose that for some $n \in \mathbb{N}$ the norm $\|\cdot\|_n$ has been defined. Let
\begin{equation*}\label{nnorm} 
 \|x\|_{n+1}= \max \big\{ \|x\|_n, \|x\|_{T_n} \big\}\quad (n\in \mathbb N),
\end{equation*}
where
\begin{equation*}
 \|x\|_{T_n}= \sup \bigg\{ \theta \sum_{i=1}^d \big\|E_ix\big\|_n\colon E_1 <\cdots < E_d,\, d \in \mathbb{N},\, \{E_i\}_{i=1}^d \subset [\mathbb{N}]^{<\omega}, \, \{\min E_{i}\}_{i=1}^d \in \mathcal{F}\bigg\}.  
\end{equation*}
We define the norm $\|x\|_{\theta,\mathcal{F}} := \sup_{n \in \mathbb{N}}\|x\|_n$ and denote by $T[\theta,\mathcal{F}]$ the completion of $c_{00}$ with respect to it. 

A proof by induction demonstrates that this norm is bounded above by the $\ell_1$-norm and is given by the following implicit formula for  $x \in T[\theta,\mathcal{F}]$: 
\begin{equation}\label{im}
    \|x\|_{\theta,\mathcal{F}}= \max \big\{\|x\|_{\infty} , \|x\|_T \big\},
\end{equation}
where
\begin{equation*}
    \|x\|_T= \sup\bigg\{\theta \sum_{i=1}^d \big\|E_ix\big\|_{\theta,\mathcal{F}} : E_1 < \cdots < E_d,\, d \in \mathbb{N},\, \{E_i\}_{i=1}^d \subset [\mathbb{N}]^{<\omega}, \,\{\min E_i\}_{i=1}^d \in \mathcal{F}\bigg\}.
\end{equation*}

It can be readily deduced from the definition that the unit vectors $(e_i)_{i=1}^\infty$ form an 1-unconditional basis of the space  $T[\theta,\mathcal{S}_{\alpha}]$ for a countable ordinal $\alpha$.

For $x_1,x_2 \in c_{00}$, we write $x_1<x_2$ whenever $\supp\, x_1 < \supp\, x_2$ and for $n\in\mathbb{N}$ we streamline the notation of $\supp\, x_1<n$ to $x_1<n.$

We adopt the following convention in this paper: we say that the norm of an element $x \in T[\theta,\mathcal{F}]$ is given by sets $E_1<E_2<\cdots < E_{d}$ for some $d\in \mathbb{N}$ (with $\{\min E_i\}_{i=1}^d \in \mathcal{F}$) precisely when
\begin{equation*}
    \|x\|_{\theta,\mathcal{F}}= \theta \cdot \sum_{i=1}^{d}\big\|E_i x\big\|_{\theta,\mathcal{F}}.
\end{equation*}

\section{Linear isometries on $T[\theta, \mathcal{S}_1]$ spaces for $\theta \in \big(0, \frac{1}{2}\big]$ }

For further considerations, let us fix $\theta \in \big(0,\frac{1}{2} \big]$ and let $\lfloor \theta^{-1} \rfloor$ and $\lceil \theta^{-1} \rceil$ be the floor and the ceiling of $\theta^{-1}$, respectively. Note that we do not yet require $\theta^{-1}$ to be an integer. Fix a~countable ordinal $\alpha \geq 1$. Throughout this paper we use $\mathbb{\mathbb{S}_{T[\theta, \mathcal{S}_{\alpha}]}}$to denote the unit sphere of $T[\theta, \mathcal{S}_{\alpha}]$.

In \cite[Theorem A]{maslany2023isometries} we have obtained the following description of linear isometries on combinatorial Tsirelson spaces.
\begin{theorem}\label{tst}
    Let $\theta \in \big(0,\frac{1}{2} \big]$. If $U\colon T\big[\theta, \mathcal{S}_1\big]\to T\big[\theta, \mathcal{S}_1\big]$ is a linear isometry, then
    \[
        U e_i = \left\{\begin{array}{ll} \varepsilon_i e_{\pi(i)}, &  1\leq i\leq \lceil \theta^{-1}  \rceil\\
        \varepsilon_i e_i, & i > \lceil \theta^{-1}  \rceil
        \end{array} \right. \quad(i\in \mathbb N)
    \]
    for some $\{-1,1\}$-valued sequence $(\varepsilon_i)_{i=1}^\infty$ and a permutation $\pi$ of $\big\{1,2,\ldots, \lceil \theta^{-1}  \rceil\big\}.$
\end{theorem}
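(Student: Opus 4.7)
Let $N := \lfloor \theta^{-1} \rfloor$ and, for any $y \in T[\theta, \mathcal{S}_1]$, write $y = y^{\leq} + y^{>}$ according to the split $\mathbb{N} = \{1, \ldots, N\} \cup \{N+1, N+2, \ldots\}$. The argument is driven by the following structural identity, which I call the \emph{Central Lemma}:
\[
\|y\| \;=\; \max\bigl(\,\|y^{\leq}\|_\infty,\; \|y^{>}\|\,\bigr).
\]
The $\geq$ direction is immediate from $1$-unconditionality of the basis. For $\leq$ I would induct on the approximating norms $\|\cdot\|_n$ of $c_{00}$. Fix an $\mathcal{S}_1$-admissible decomposition $E_1 < \cdots < E_d$ (so $d \leq \min E_1$). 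If $\min E_1 > N$, each $E_i y$ equals $E_i y^{>}$, so $\theta \sum_i \|E_i y\|_n \leq \|y^{>}\|_{T_n} \leq \|y^{>}\|$. If instead $\min E_1 \leq N$, the key constraint $d \leq N$ kicks in: applying the inductive hypothesis to each $E_i y$ yields $\|E_i y\|_n \leq \max(\|(E_i y)^{\leq}\|_\infty,\,\|(E_i y)^{>}\|_n) \leq \max(\|y^{\leq}\|_\infty,\,\|y^{>}\|_n)$, and summing $d \leq N$ such terms together with $N\theta \leq 1$ (from $N = \lfloor\theta^{-1}\rfloor$) bounds the total by $\max(\|y^{\leq}\|_\infty, \|y^{>}\|_n)$. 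Passing $n \to \infty$ gives the Lemma.

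Both implications of Theorem~A then follow quickly. For $(\Leftarrow)$, a $U$ of the prescribed form acts on $y^{\leq}$ by a signed permutation of $\{1, \ldots, N\}$, preserving $\|\cdot\|_\infty$, and on $y^{>}$ only by sign changes, preserving the full norm by $1$-unconditionality; the Lemma then forces $\|Uy\| = \|y\|$. For $(\Rightarrow)$, Theorem~\ref{tst} already gives $U e_i = \varepsilon_i e_{\pi(i)}$ for $i \leq \lceil\theta^{-1}\rceil$ and $U e_i = \varepsilon_i e_i$ otherwise, with $\pi$ permuting $\{1, \ldots, \lceil\theta^{-1}\rceil\}$. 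If $\theta^{-1} \in \mathbb{N}$ we are done; otherwise $\lceil\theta^{-1}\rceil = N+1$, and I suppose towards contradiction that $\pi(N+1) = j \leq N$. Set $x := \sum_{i=N+1}^{2N+1} e_i$: the singleton decomposition $E_i := \{N+i\}$ ($i = 1, \ldots, N+1$) is $\mathcal{S}_1$-admissible since $d = N+1 \leq N+1 = \min E_1$, giving $\|x\| \geq (N+1)\theta > 1$. On the other hand, $Ux = \varepsilon_{N+1} e_j + \sum_{i=N+2}^{2N+1} \varepsilon_i e_i$, so $\|(Ux)^{\leq}\|_\infty = 1$, while $\|(Ux)^{>}\| = \|\sum_{i=N+2}^{2N+1} e_i\|$ by $1$-unconditionality; any $\mathcal{S}_1$-admissible decomposition of this $N$-term sum satisfies the $\ell_1$-type bound $\sum_i \|E_i(\cdot)\| \leq N$, so $\|(Ux)^{>}\| \leq \max(1, N\theta) = 1$. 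The Central Lemma now forces $\|Ux\| = 1 < \|x\|$, contradicting that $U$ is an isometry. Hence $\pi(N+1) = N+1$ and $\pi$ restricts to a permutation of $\{1, \ldots, N\}$, as required.

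The technical heart is the Central Lemma, specifically its mixed case $\min E_1 \leq N$. A direct triangle-inequality estimate yields only $\|y\| \leq \|y^{\leq}\|_\infty + \|y^{>}\|$, which is off by a factor of two; sharpening this to a genuine maximum requires using the inductive hypothesis to bound every $\|E_i y\|_n$ by the \emph{same} quantity $\max(\|y^{\leq}\|_\infty, \|y^{>}\|_n)$, then contracting via $d \leq N$ together with $N\theta \leq 1$. This rigidity collapses precisely for $e_{N+1}$, where $(N+1)\theta > 1$ admits $\mathcal{S}_1$-admissible singleton decompositions with $d = N+1$ terms, and that collapse is exactly the lever driving the necessity argument.
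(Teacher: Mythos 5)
The statement you were asked to prove is Theorem~\ref{tst} itself --- the assertion that \emph{every} linear isometry of $T[\theta,\mathcal S_1]$ acts as a signed permutation of $e_1,\dots,e_{\lceil\theta^{-1}\rceil}$ and as a sign change on the remaining basis vectors. Your argument does not prove this: in your necessity step you write that ``Theorem~\ref{tst} already gives $Ue_i=\varepsilon_ie_{\pi(i)}$\dots'', i.e.\ you invoke the very statement under consideration as a black box and then go on to prove something else, namely Theorem~\ref{Th:A} (the sharpening from $\lceil\theta^{-1}\rceil$ to $\lfloor\theta^{-1}\rfloor$, plus sufficiency). Nothing in your proposal addresses the actual content of Theorem~\ref{tst}: that $Ue_i$ must be $\pm$ a single basis vector at all, that the images of $e_1,\dots,e_{\lceil\theta^{-1}\rceil}$ stay among the first $\lceil\theta^{-1}\rceil$ coordinates, and that each $e_i$ with $i>\lceil\theta^{-1}\rceil$ is fixed up to sign. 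Your Central Lemma is a statement about the norm, not about isometries, and by itself yields none of these facts; the missing work is exactly the extreme-point/coordinate analysis carried out in \cite{maslany2023isometries}, which this paper also does not reproduce but simply cites. So as a proof of the assigned statement the proposal is circular.

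That said, the material you do supply is sound and worth noting. The Central Lemma $\|y\|=\max\bigl(\|y^{\leq}\|_\infty,\|y^{>}\|\bigr)$ with the cut at $N=\lfloor\theta^{-1}\rfloor$ is correct (the induction with the contraction $\theta d\leq\theta N\leq 1$ in the mixed case goes through, using $1$-unconditionality of each $\|\cdot\|_n$), and it gives a cleaner, self-contained proof of the sufficiency direction of Theorem~\ref{Th:A} than the paper's reference to \cite[Theorem 4.1]{antunes2022surjective}. Your contradiction vector $x=\sum_{i=N+1}^{2N+1}e_i$ with $\|x\|\geq(N+1)\theta>1$ and $\|Ux\|=1$ is a valid alternative to the paper's choice of $e_i-\sum_{k=1}^{N}e_{j_k}$. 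But all of this is a proof of Theorem~\ref{Th:A} \emph{given} Theorem~\ref{tst}, not a proof of Theorem~\ref{tst}.
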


  Armed with this result, we are now ready to prove Theorem~\ref{Th:A}.
  \begin{proof}
       Suppose that $U\colon T\big[\theta, \mathcal{S}_1\big]\to T\big[\theta, \mathcal{S}_1\big]$ is a linear isometry.If $\theta^{-1}$ is an integer, there is nothing to prove, so assume that this is not the case. Then $\lceil \theta^{-1}  \rceil - 1 = \lfloor \theta^{-1}  \rfloor$ and $\lceil \theta^{-1}  \rceil > \theta^{-1}$.
       
       It is enough to show that for any $i \neq \lceil \theta^{-1}  \rceil$ holds $\pi(i) \neq \lceil \theta^{-1}  \rceil$. 

       Let $i \in \{1,2,\ldots, \lceil \theta^{-1}  \rceil - 1\}$ and suppose for the contrary that $\pi(i)= \lceil \theta^{-1}  \rceil$, \emph{i.e.}, $Ue_i = \varepsilon_i e_{\lceil \theta^{-1}  \rceil}$. Then, by Theorem \ref{tst}, for any indices $\lceil \theta^{-1}  \rceil<j_1<j_2<\cdots <j_{\lceil \theta^{-1}  \rceil-1}$ we have 
       \[
         \Bigg\|Ue_i - \sum_{k=1}^{\lceil \theta^{-1}  \rceil - 1} Ue_{j_k}  \Bigg\| = \bigg\| \varepsilon_i e_{\lceil \theta^{-1}  \rceil} -  \sum_{k=1}^{\lceil \theta^{-1}  \rceil - 1} \varepsilon_{j_k} e_{j_k}  \bigg\| = \theta \cdot \lceil \theta^{-1}  \rceil > 1.
       \] 
       On the other hand, since $U$ is a linear isometry, we obtain
       \[
        \Bigg\|Ue_i - \sum_{k=1}^{\lceil \theta^{-1}  \rceil - 1} Ue_{j_k}  \Bigg\| =\Bigg\|Ue_i - U \Bigg(\sum_{k=1}^{\lceil \theta^{-1}  \rceil - 1} e_{j_k} \Bigg) \Bigg\| = \Bigg\|e_i - \sum_{k=1}^{\lceil \theta^{-1}  \rceil - 1} e_{j_k}\Bigg\| = 1.
       \] 
       This contradiction finishes the proof that the isometry has the desired form. 

        To show that all such maps are indeed isometries, we refer the reader to \cite[Theorem 4.1]{antunes2022surjective} for technical details, because the same argument as for $\theta^{-1}$ being an integer applies.
  \end{proof}

\section{Isometries on $\mathbb{S}_{T[\theta, \mathcal{S}_{\alpha}]}$ for an integer $\theta^{-1} \geq 2$ and $1\leqslant \alpha < \omega_1$}

To prove Theorem~\ref{Th:B} we need a series of lemmas; the proofs emulate that of \cite{tan2012isometries}.

\begin{lemma}\label{l3}
    Let $u,v \in \mathbb{S}_{T[\theta, \mathcal{S}_{\alpha}]}$.  Then for $\alpha = 1$ we have
    \begin{outline}[enumerate]
        \1 $\min ( \|u+y\|, \|u-y\| ) \leq 1$ for all $y \in \mathbb{S}_{T[\theta, \mathcal{S}_{\alpha}]}$ if and only if $u \in \{\pm e_1, \pm e_2, \ldots, \pm e_{\lfloor \theta^{-1} \rfloor}\}$;
        \1 If $v \geq \lfloor \theta^{-1}  \rfloor + 1$ and $\min ( \|v+y\|, \|v-y\| ) \leq \theta \cdot (\lfloor \theta^{-1}  \rfloor+1)$ for all $y \in \mathbb{S}_{T[\theta, \mathcal{S}_{\alpha}]}$, then $v$ has one of the following forms:
            \2  $|v_{\lfloor \theta^{-1} \rfloor+1}| = 1$ with $|v_i| \leq \theta$ for all $i\neq \lfloor \theta^{-1}  \rfloor+1$;
            \2 $v=\varepsilon e_m + a e_{\lfloor \theta^{-1}  \rfloor+1}$ for some $m \geq \lfloor \theta^{-1}  \rfloor+2$, some $\varepsilon \in \{-1,1\}$ and some $|a|\leq \theta$,
    \end{outline}        
    and for $\alpha > 1$ holds 
    \begin{itemize}
    \item[(3)]  $\min ( \|u+y\|, \|u-y\| ) \leq 1$ for all $y \in \mathbb{S}_{T[\theta, \mathcal{S}_{\alpha}]}$ if and only if $u = \pm e_1$;
    \item[(4)]  If $v > 1$ and $\min ( \|v+y\|, \|v-y\| ) \leq \theta \cdot (\lfloor \theta^{-1}  \rfloor+1)$ for all $y \in \mathbb{S}_{T[\theta, \mathcal{S}_{\alpha}]}$, then $v= \pm e_m$ for some $m > 1$.
    \end{itemize}  
\end{lemma}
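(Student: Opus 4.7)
The plan is to emulate the proof of \cite[Lemma 1]{tan2012isometries} (which treats $\theta = \tfrac{1}{2}$, $\alpha = 1$), adapting it to arbitrary integer $\theta^{-1} \geq 2$ and to $\mathcal{S}_\alpha$ for $1 \leq \alpha < \omega_1$. All four parts are proved by combining the implicit formula (\ref{im}) with the combinatorial restriction $\{\min E_k\}_{k=1}^d \in \mathcal{S}_\alpha$ on admissible norming families.

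For the reverse direction of (1), fix $u = \varepsilon e_i$ with $1 \leq i \leq \lfloor\theta^{-1}\rfloor$ and $y \in \mathbb{S}_{T[\theta, \mathcal{S}_1]}$. Choose the sign $\eta \in \{\pm 1\}$ so that $\varepsilon \eta y_i \leq 0$, which yields $\|u + \eta y\|_\infty \leq 1$; it then suffices to bound the $T$-part of the norm by $1$. For any admissible family $\{E_k\}_{k=1}^d$, either (a) $i \notin \bigcup_k E_k$, in which case $\theta \sum_k \|E_k(u + \eta y)\| = \theta \sum_k \|E_k y\| \leq \|y\| = 1$; or (b) $i \in E_{k_0}$ for a unique $k_0$, which forces $d \leq \min E_1 \leq i \leq \lfloor\theta^{-1}\rfloor$. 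In case (b), use the identity $E_{k_0}(u + \eta y) = \varepsilon e_i + \eta E_{k_0} y$ to reduce to an estimate on $\|\varepsilon e_i + \eta E_{k_0} y\|$; a routine induction on the approximants $\|\cdot\|_n$ defined just before (\ref{im}), combined with the sign cancellation $|1 + \eta y_i| = 1 - |y_i|$, then closes the bound $\|u + \eta y\| \leq 1$.

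For the forward direction of (1), if $u \in \mathbb{S}_{T[\theta, \mathcal{S}_1]}$ is not of the stated form, then either (i) $\supp u \cap \{\lfloor\theta^{-1}\rfloor + 1, \lfloor\theta^{-1}\rfloor + 2, \ldots\} \neq \emptyset$, or (ii) $\supp u \subseteq \{1, \ldots, \lfloor\theta^{-1}\rfloor\}$ but $u$ has at least two nonzero coordinates. In case (i), take $y$ to be a normalized combination $\sum_{k=1}^{\lfloor\theta^{-1}\rfloor} e_{j_k}$ over a suitable $\mathcal{S}_1$-admissible block $j_1 < \cdots < j_{\lfloor\theta^{-1}\rfloor}$ chosen to lie beyond $\supp u$; in case (ii), take $y$ supported on the $\mathcal{S}_1$-admissible block $\{\lfloor\theta^{-1}\rfloor, \ldots, 2\lfloor\theta^{-1}\rfloor - 1\}$. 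In each sub-case, a direct calculation using this admissible family shows both $\|u \pm y\| > 1$.

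Parts (2), (3), (4) follow the same blueprint. For (2), the constant $\theta(\lfloor\theta^{-1}\rfloor + 1) = 1 + \theta$ equals the norm of $\sum_{k=1}^{\lfloor\theta^{-1}\rfloor + 1} e_{m_k}$ on any $\mathcal{S}_1$-admissible block starting at $\lfloor\theta^{-1}\rfloor + 1$, and testing the hypothesis against $y = e_m$ for varying $m$ forces $|v_j| \leq \theta$ for $j \neq \lfloor\theta^{-1}\rfloor + 1$, from which the two shapes are distinguished by whether the leading mass sits at $\lfloor\theta^{-1}\rfloor + 1$ or at some higher index. For (3) and (4), the richer family $\mathcal{S}_\alpha$, $\alpha > 1$, admits decompositions with $d \geq 2$ and $\min E_1 \geq 2$, ruling out all $u = \pm e_i$ with $i \geq 2$ from having the minimum property (only $i = 1$ survives, since $1 \in E_1$ forces $d = 1$), and similarly reducing $v$ to $\pm e_m$ in (4). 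The main obstacle throughout is the reverse direction of (1): the triangle inequality alone yields only $\|u + \eta y\| \leq 1 + \theta$, so the sharp bound $\leq 1$ requires tracking the sign-induced cancellation at coordinate $i$ through the recursive unfolding of the Tsirelson norm, exploiting the admissibility bound $d \leq \lfloor\theta^{-1}\rfloor$ on every norming family that ``sees'' index $i$.
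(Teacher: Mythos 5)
Your treatment of the reverse implication in (1) is correct and in fact takes a genuinely different route from the paper. You fix the cancelling sign $\eta$ first and induct on the approximating norms $\Vert\cdot\Vert_n$; this does close, because any admissible family $\{E_k\}_{k=1}^d$ whose union contains $i$ satisfies $d\leq\min E_1\leq i\leq\lfloor\theta^{-1}\rfloor$, whence
\[
\theta\sum_{k\neq k_0}\Vert E_k y\Vert_n+\theta\,\Vert \varepsilon e_i+\eta E_{k_0}y\Vert_n\;\leq\;\theta(d-1)+\theta\;\leq\;\theta\lfloor\theta^{-1}\rfloor\leq 1,
\]
once the inductive hypothesis (stated for all $w$ with $\Vert w\Vert\leq1$ and $\eta w_i\leq0$, and applied here to $w=E_{k_0}y$) is used on the last term; the sign cancellation is needed only at the base case $\Vert\cdot\Vert_0=\Vert\cdot\Vert_\infty$. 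The paper argues differently: it assumes a family attains the norm of $x=y\pm e_i$ and shows that either $\theta\sum_j\Vert E_jx\Vert\leq\theta d\Vert x\Vert<\Vert x\Vert$ (so the family cannot norm $x$) or, in the borderline case $d=\theta^{-1}$, that $\Vert x\Vert=\Vert E_jx\Vert=\Vert E_jy\Vert\leq1$ for $j\geq2$. Both mechanisms work; yours avoids the attainment issue and makes the role of the bound $d\leq\lfloor\theta^{-1}\rfloor$ more transparent.

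There is, however, a genuine error in your case (ii) of the forward implication of (1). A witness $y$ supported on the block $\{\lfloor\theta^{-1}\rfloor,\ldots,2\lfloor\theta^{-1}\rfloor-1\}$ meets $\supp u\subseteq\{1,\ldots,\lfloor\theta^{-1}\rfloor\}$ in at most the single index $\lfloor\theta^{-1}\rfloor$, so the sign that cancels there gives $\Vert u-y\Vert\leq1$ rather than $>1$: for $\theta=\tfrac12$, $u=\delta e_1+e_2$ and $y=e_2+e_3$ yield $u-y=\delta e_1-e_3$, of norm exactly $1$. The witness must engage \emph{two} nonzero coordinates $u_p,u_q$ of $u$ with opposite signs, e.g.\ $y=\operatorname{sgn}(u_p)e_p-\operatorname{sgn}(u_q)e_q$, which lies on the sphere and gives $\Vert u+y\Vert\geq1+|u_p|$ and $\Vert u-y\Vert\geq1+|u_q|$ already in the supremum norm. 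Your sketch of (2) has the same defect in sharper form: the claim that testing against $y=e_m$ forces $|v_j|\leq\theta$ for all $j\neq\lfloor\theta^{-1}\rfloor+1$ contradicts the admissible conclusion $v=\varepsilon e_m+ae_{\lfloor\theta^{-1}\rfloor+1}$ (where $|v_m|=1$), and no single test vector $\pm e_j$ can work since one of the two signs always decreases the $j$-th coordinate; one needs the two-sided witnesses of Tan's Lemma~2.1.2 transplanted to blocks of length $\lfloor\theta^{-1}\rfloor$, as the paper indicates.
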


\begin{proof}
    \begin{outline}[enumerate] 
    \1 ($\Leftarrow$) 
    Proof of this implication is inspired by a part of the proof of \cite[Theorem 4.1]{antunes2022surjective}. Let $x = y \pm e_i$ for $i \leq \lfloor \theta^{-1} \rfloor$.         
    Suppose that the norm of $x$ is given by certain sets $d \leq E_1<E_2<\cdots < E_{d}$ for some $d\in \mathbb{N}$ in the sense that
\begin{equation}\label{bb}
    \|x\|= \theta \cdot \sum_{j=1}^{d}\big\|E_j x\big\|.
\end{equation}

It is enough to show that $\|x\| \leq \|y\|$. Suppose that $\min E_1 < i.$ Then
\begin{equation*}
   d \leq \min E_1 < i \leq {\theta^{-1}},
\end{equation*}
so
\begin{equation*}
    \theta \cdot \sum_{i=j}^{d}\big\|E_jx\big\|\leq \theta \cdot d \cdot \|x\| < \|x\|.
\end{equation*}
Hence \eqref{bb} cannot hold; a contradiction. Suppose that $\min E_1 = i.$ Then $d=i$, so from  \eqref{bb} we have $\|x\| = \|E_jx\|$ for any $1 \leq j \leq d$. Hence for $j \geq 2$ holds
\[
  \|x\| = \|E_jx\| = \|E_jy\| \leq \|y\|.   
\]
If $\min E_1 > i$ then obviously $\|x\| = \|y\|$.

Suppose that $\|x\| = \|x\|_{\infty} > 1$. Then $|y_i + 1| > 1$ and $|y_i - 1| > 1$; a contradiction.

     ($\Rightarrow$)
     The proof is similar to the proof of \cite{tan2012isometries}[Lemma 2.1.1]. Take any indices $\supp u < j_1 < j_2 < \ldots < j_{\lfloor \theta^{-1} \rfloor}$ instead of $2<k<l$.
     
     \1 Again, the proof is analogous to \cite{tan2012isometries}[Lemma 2.1.2]. Take $\varepsilon = \frac{\theta^{-1} - \lfloor \theta^{-1}\rfloor + 1 - \|v\|_{\infty}}{4} > 0$ at the beginning of the proof and indices $\max \{ m, i\} < j_1 < j_2 < \ldots < j_{\lfloor \theta^{-1}\rfloor}$ at the end of the proof (instead of indices $\max \{ m, i\} < n < n+1$).
    \end{outline}
    The proof of $(3)$ and $(4)$ is similar to the proofs of $(1)$ and $(2)$, respectively. Indeed, it is enough to take indices  $ j_1 < j_2 < \ldots < j_{\lfloor \theta^{-1}\rfloor} $ with additional assumption: $j_1 > {\lfloor \theta^{-1}\rfloor}$.
 \end{proof}   
 
 \begin{lemma}\label{l4}
       Let $x \in \mathbb{S}_{T[\theta, \mathcal{S}_{\alpha}]}$. Then $\|x+e_n\| = 2$ if and only if $x(n) = 1$. 
 \end{lemma}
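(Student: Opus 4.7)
The plan is to invoke the decomposition $\|\cdot\| = \max(\|\cdot\|_\infty, \|\cdot\|_T)$ supplied by formula \eqref{im} together with the constraint $\theta \leq \tfrac12$, which will force the $T$-component of $\|x+e_n\|$ to stay strictly below $2$. Consequently, any equality $\|x+e_n\| = 2$ will have to be witnessed by $\|x+e_n\|_\infty$, and this in turn will pin down $x(n)=1$.

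The implication ($\Leftarrow$) is immediate: if $x(n)=1$, then $|(x+e_n)(n)| = 2$, whence $\|x+e_n\| \geq \|x+e_n\|_\infty \geq 2$, while the triangle inequality provides the matching upper bound $\|x+e_n\| \leq \|x\| + \|e_n\| = 2$.

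For ($\Rightarrow$), the key step is the estimate $\|x+e_n\|_T \leq 1 + \theta$. Given any admissible family $E_1 < \cdots < E_d$ with $\{\min E_i\}_{i=1}^d \in \mathcal{S}_\alpha$, the triangle inequality combined with the observation that the $E_i$ are pairwise disjoint (being consecutive), so that $n$ lies in at most one of them, yields
\[
\theta \sum_{i=1}^d \|E_i(x+e_n)\| \;\leq\; \theta \sum_{i=1}^d \|E_i x\| \;+\; \theta \sum_{i=1}^d \|E_i e_n\| \;\leq\; \|x\|_T + \theta \;\leq\; 1 + \theta.
\]
Taking the supremum over admissible families gives $\|x+e_n\|_T \leq 1 + \theta \leq \tfrac32 < 2$. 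Hence the hypothesis $\|x+e_n\|=2$ forces $\|x+e_n\|_\infty = 2$. Since $(x+e_n)(i) = x(i)$ for $i \neq n$ and $|x(i)| \leq \|x\|_\infty \leq \|x\| = 1$, the value $2$ can only be attained at coordinate $n$, so $|x(n)+1| = 2$; combined with $|x(n)| \leq 1$ this forces $x(n) = 1$.

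I do not anticipate a serious obstacle: the combinatorial structure of $\mathcal{S}_\alpha$ is used only through the general inequality $\|x\|_T \leq \|x\|$, and the gap $2 - (1+\theta) \geq \tfrac12$ comfortably absorbs the triangle-inequality slack. The same argument accommodates every countable ordinal $\alpha \geq 1$ and every $\theta \in \big(0, \tfrac12\big]$, with no integrality hypothesis on $\theta^{-1}$.
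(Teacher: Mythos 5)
Your proposal is correct and follows essentially the same route as the paper: both arguments bound the $T$-part of $\|x+e_n\|$ by $\theta(\theta^{-1}+1)=1+\theta<2$ using that $n$ lies in at most one of the consecutive sets $E_i$, forcing the norm to be attained by $\|\cdot\|_\infty$ and hence $x(n)=1$. Your write-up is if anything slightly more complete, since you spell out the final coordinate argument that the paper leaves implicit.
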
   

 \begin{proof}
     We omit the proof of implication ($\Leftarrow$) because it is trivial. Assume that $\|x+e_n\| = 2$. It is enough to show that norm of vector $x+e_n$ is the supremum norm. 
     
    Take any sets $d \leq E_1 < E_2<\cdots < E_d$. We may assume that $n \in E_{i_0}$ for some $i_0 \in \{1,2,\ldots,d\}$. Indeed, if this is not the case, we will not get a norm of vector $x+e_n$ greater than $1$, because $\|x\|=1$.

    Since $E_{i}e_n = 0$ for $i \neq i_0$ and $\|x\|=1$ we obtain
    \begin{equation*}
    \begin{split}
        \theta \sum_{i=1}^{d} \|E_i (x+e_n) \| 
        &\leq  \theta \sum_{i=1}^{d} ( \|E_i x \| + \|E_i e_n \| ) \\
        &= \theta \sum_{i=1}^{d}\|E_i x \| + \theta \|E_{i_0} e_n \| \\
        &\leq \theta ( \theta^{-1} + 1 ) < 2.
    \end{split}
    \end{equation*}
 \end{proof}

The proof of the subsequent lemma is analogous to the proof of \cite[Lemma 2.3]{tan2012isometries}, so we skip it.

\begin{lemma}\label{l5}
    If $U\colon \mathbb{S}_{T[\theta, \mathcal{S}_{\alpha}]} \to \mathbb{S}_{T[\theta, \mathcal{S}_{\alpha}]}$ is an isometry satisfying $-U( \mathbb{S}_{T[\theta, \mathcal{S}_{\alpha}]}) \subset U( \mathbb{S}_{T[\theta, \mathcal{S}_{\alpha}]})$, then $-U(e_i)=U(-e_i)$ for $i \in \mathbb{N}$.
\end{lemma}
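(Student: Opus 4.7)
I would follow \cite[Lemma 2.3]{tan2012isometries}, combining the characterizations supplied by Lemmas~\ref{l3} and~\ref{l4} with a short distance-$2$ argument.

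By the hypothesis and the injectivity of the isometry $U$, for every $i \in \mathbb{N}$ there is a unique $v_i \in \mathbb{S}_{T[\theta, \mathcal{S}_{\alpha}]}$ with $U(v_i) = -U(e_i)$. The conclusion $U(-e_i) = -U(e_i)$ reduces, via injectivity of $U$, to proving $v_i = -e_i$. From $U(v_i) + U(e_i) = 0$ and the isometric property I first record
\[
    \|v_i - e_i\| = \|U(v_i) - U(e_i)\| = \|-2U(e_i)\| = 2,
\]
so Lemma~\ref{l4} applied to $-v_i \in \mathbb{S}_{T[\theta, \mathcal{S}_{\alpha}]}$ with $n = i$ yields $v_i(i) = -1$. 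This pins down the $i$-th coordinate but leaves the remaining ones unconstrained a priori.

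The plan for killing the remaining coordinates is to invoke Lemma~\ref{l3} to confine $U(e_i)$ and $U(-e_i)$ to a small explicit set in which the distance-$2$ constraint is enjoyed only by antipodal pairs. Concretely, for $\alpha = 1$ and $i \leq \lfloor \theta^{-1} \rfloor$, both $e_i$ and $-e_i$ satisfy the isometrically invariant condition in Lemma~\ref{l3}(1), so $U(e_i)$ and $U(-e_i)$ both lie in $\{\pm e_1, \ldots, \pm e_{\lfloor \theta^{-1}\rfloor}\}$. A direct $\mathcal{S}_1$-norm computation — exploiting that the Schreier rule forbids the singleton split $\{j\}, \{k\}$ whenever $\min\{j,k\}=1$ — shows that all pairwise distances inside this set are at most $1$ except for antipodal pairs, which realize $2$; together with $\|U(e_i) - U(-e_i)\| = \|e_i -(-e_i)\| = 2$, this forces $U(-e_i) = -U(e_i)$. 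The case $\alpha > 1$, $i = 1$ is identical with Lemma~\ref{l3}(3) in place of~(1). The outstanding cases ($\alpha = 1$, $i > \lfloor \theta^{-1}\rfloor$ and $\alpha > 1$, $i > 1$) are handled in the same manner using Lemma~\ref{l3}(2) and~(4), respectively, whose admissible-image sets are still small enough for the distance-$2$ constraint to force antipodality.

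The main obstacle will be verifying the hypothesis of Lemma~\ref{l3}(2) (respectively~(4)) for the specific vector $e_i$: one must check that $\min(\|e_i + y\|, \|e_i - y\|) \leq \theta(\lfloor \theta^{-1}\rfloor + 1)$ for every $y \in \mathbb{S}_{T[\theta,\mathcal{S}_{\alpha}]}$. For a given $y$ the argument will select $\varepsilon \in \{-1,1\}$ so that $e_i + \varepsilon y$ admits no admissible decomposition whose value exceeds $\theta(\lfloor \theta^{-1}\rfloor + 1)$, a delicate estimate which leans on the precise Schreier constraint $\{\min E_k\}_{k=1}^{d} \in \mathcal{S}_\alpha$ and the inductive definition of the Tsirelson-type norm.
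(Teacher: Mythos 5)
Your opening moves are sound and match the standard argument: producing $v_i$ with $U(v_i)=-U(e_i)$, computing $\|v_i-e_i\|=\|{-2U(e_i)}\|=2$, and invoking Lemma~\ref{l4} (applied to $-v_i$) to get $v_i(i)=-1$ is exactly the right start, and the reduction of the whole lemma to $v_i=-e_i$ is correct. The problem is the way you then try to kill the remaining coordinates. You call the condition in Lemma~\ref{l3}(1) ``isometrically invariant,'' but it is not: the quantity $\|u+y\|=\|u-(-y)\|$ involves the antipode of $y$, and a surjective isometry of the sphere is not known a priori to commute with the antipodal map --- that is precisely the content of Lemma~\ref{l5}. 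Concretely, to verify $\min(\|U(e_i)+y'\|,\|U(e_i)-y'\|)\leq 1$ for all $y'$ you would write $y'=U(x)$ and $-y'=U(z)$ (note already that $y'=U(x)$ needs surjectivity, which Lemma~\ref{l5} does not assume), obtaining $\min(\|e_i-z\|,\|e_i-x\|)$; Lemma~\ref{l3}(1) only controls $\min(\|e_i-x\|,\|e_i+x\|)$, and these agree only if $z=-x$, i.e.\ only if the conclusion of Lemma~\ref{l5} already holds. Indeed, in this paper the transfer of the Lemma~\ref{l3} conditions through $U$ is carried out in Step~1 of the proof of Lemma~\ref{l7} precisely by citing Lemma~\ref{l5}, so your plan inverts the logical order of the two lemmas.

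A second, independent defect: even granting the confinement, your closing claim fails in the cases governed by Lemma~\ref{l3}(2). Writing $k=\lfloor\theta^{-1}\rfloor$, the vectors $u=e_{k+1}+\theta e_{k+2}$ and $w=-e_{k+1}+\theta e_{k+2}$ both have norm one, both have the form described in (2)(a), and satisfy $\|u-w\|=\|2e_{k+1}\|=2$ while $w\neq -u$; so within that admissible set distance $2$ does not force antipodality, and the argument does not close for $i>\lfloor\theta^{-1}\rfloor$ (nor, analogously, via (4) for $\alpha>1$). The paper itself omits the proof and defers to the argument of Tan's Lemma~2.3, which stays entirely with the vector $v_i$ produced in your first step and finishes by purely metric computations showing its off-$i$ coordinates vanish, rather than by first locating $U(\pm e_i)$; you should return to that thread instead of routing through Lemma~\ref{l3}.
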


\begin{lemma}\label{l7}
     Let $\theta^{-1} \geq 2$ be an integer and let $U\colon \mathbb{S}_{T[\theta, \mathcal{S}_{\alpha}]} \to \mathbb{S}_{T[\theta, \mathcal{S}_{\alpha}]}$ be surjective isometry.  If $\alpha = 1$ then 
     \[
        U e_i = \left\{\begin{array}{ll} \varepsilon_i e_{\pi(i)}, &  1\leq i\leq \theta^{-1} \\
        \varepsilon_i e_i, & i >  \theta^{-1} 
        \end{array} \right. \quad(i\in \mathbb N),
    \]
    and if  $\alpha > 1$ then $Ue_i = \varepsilon_i e_i$, where $(\varepsilon_i)_{i=1}^\infty$ is some $\{-1,1\}$-valued sequence and $\pi$ is a permutation of $\big\{1,2,\ldots,  \theta^{-1}  \big\}.$
\end{lemma}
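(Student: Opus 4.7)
The plan is to follow the blueprint of \cite{tan2012isometries}, leveraging Lemmas \ref{l3}, \ref{l4}, and \ref{l5}. Since $U$ surjects onto the antipodally symmetric sphere, the hypothesis $-U(\mathbb{S}_{T[\theta,\mathcal{S}_\alpha]}) \subset U(\mathbb{S}_{T[\theta,\mathcal{S}_\alpha]})$ of Lemma \ref{l5} is automatic, yielding $U(-e_i) = -U(e_i)$. This establishes a \emph{transfer principle}: any condition of the form ``$\min(\|u+y\|, \|u-y\|) \leq c$ for every $y \in \mathbb{S}_{T[\theta, \mathcal{S}_\alpha]}$'' is preserved under $U$, because $\|Uu \pm Uy\|$ agrees with $\|u \pm y\|$ after adjusting the sign of $y$, and $y \mapsto Uy$ exhausts the sphere.

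For the case $\alpha = 1$ and $1 \leq i \leq \theta^{-1}$, the vector $e_i$ satisfies the hypothesis of Lemma \ref{l3}(1), so by transfer so does $Ue_i$, placing $Ue_i \in \{\pm e_1, \ldots, \pm e_{\theta^{-1}}\}$. Bijectivity of $U$ together with antipodal-commutativity then yield a permutation $\pi$ of $\{1, \ldots, \theta^{-1}\}$ and a sign sequence $(\varepsilon_i)_{i \leq \theta^{-1}}$ with $Ue_i = \varepsilon_i e_{\pi(i)}$.

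For $i > \theta^{-1}$, writing $v = Ue_i$, the argument proceeds in three phases. First, $\supp(v) \subset \{\theta^{-1}+1, \theta^{-1}+2, \ldots\}$: since $2\theta \leq 1$ gives $\|e_i \pm e_j\| = 1$ for every $j \leq \theta^{-1}$, the isometry yields $\|v \pm e_{\pi(j)}\| = 1$ for each such $j$; letting $j$ range over $\{1, \ldots, \theta^{-1}\}$ and invoking the supremum-norm lower bound forces $v_k = 0$ for every $k \leq \theta^{-1}$. Second, a direct computation (bounding $\|e_i \pm y\|_T \leq 1 + \theta$ via the admissibility constraint $d \leq \min E_1$, and choosing the sign so that $\|e_i \pm y\|_\infty \leq 1$) shows $e_i$ satisfies the hypothesis of Lemma \ref{l3}(2), whence by transfer $v$ is in one of the two listed forms. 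Third, to pin down $v = \pm e_i$, I would use Lemma \ref{l4} together with the isometry identity $\|Ue_i + e_n\| = \|e_i + U^{-1}e_n\|$ (obtained via antipodal-commutativity), which produces the duality $Ue_i(n) = 1 \iff (U^{-1}e_n)(i) = 1$. The form of $v$ isolates a specific $n$ with $v_n = \pm 1$; applying Lemma \ref{l3}(2) to $U^{-1}e_n$ (which enjoys the same support bound) together with the duality pins $n = i$ and eliminates the small-coordinate slack in forms (i) and (ii).

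The case $\alpha > 1$ is strictly parallel, replacing parts (1), (2) of Lemma \ref{l3} by parts (3), (4): part (3) forces $Ue_1 = \pm e_1$, and for $i > 1$, part (4) directly gives $Ue_i = \pm e_{\sigma(i)}$ for some $\sigma(i) > 1$, with the Lemma \ref{l4}-induced duality (bootstrapped between $U$ and $U^{-1}$) forcing $\sigma = \mathrm{id}$. I expect the main obstacle to be the pin-down step in the $\alpha = 1$ case: Lemma \ref{l3}(2) alone allows an entire $\theta$-ball of undetermined small coordinates, and removing this slack to obtain the exact equality $v = \pm e_i$ requires a delicate interplay between the Lemma \ref{l4} duality and a symmetric application of Lemma \ref{l3}(2) to $U^{-1}$.
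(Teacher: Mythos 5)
Your proposal reproduces the paper's strategy for the coordinates $i\leq\theta^{-1}$ (transfer via surjectivity and Lemma \ref{l5}, then Lemma \ref{l3}(1) and the relations $\|e_i\pm e_j\|=1$ to extract the permutation $\pi$), and correctly identifies that for $i>\theta^{-1}$ one first shows $\supp U(e_i)\subset\{\theta^{-1}+1,\dots\}$ and then invokes Lemma \ref{l3}(2). Up to that point you are aligned with the paper.

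The genuine gap is your final step. You claim that the duality $U e_i(n)=\pm1\iff (U^{-1}e_n)(i)=\pm1$, ``bootstrapped between $U$ and $U^{-1}$,'' pins $n=i$ and forces $\sigma=\mathrm{id}$. It does not: that duality is exactly the statement $\tilde\sigma=\sigma^{-1}$, which only shows that $\sigma$ is a bijection of $\{\theta^{-1}+1,\theta^{-1}+2,\dots\}$ (resp.\ of $\{2,3,\dots\}$ when $\alpha>1$). Every relation you use up to this point --- norms of $e_i\pm e_j$, the two normal forms of Lemma \ref{l3}(2), and the $U\leftrightarrow U^{-1}$ symmetry --- is invariant under relabelling the tail indices, so no combination of them can distinguish $\sigma$ from a nontrivial permutation of the tail. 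To rule out such permutations one must test $U$ against vectors of large support, whose norms are sensitive to position through the admissibility constraint $d\leq\min E_1$. This is what the paper's Step~3 does: it introduces $x_k=k^{-1}\theta^{-1}\sum_{i=k}^{2k-1}e_i$, shows $\supp U(x_k)\subseteq\sigma(\{k,\dots,2k-1\})$, and derives $\sigma(k)\geq k$ from a careful two-sided norm estimate (split into the ranges $\theta^{-1}<k\leq2\theta^{-1}$ and $k>2\theta^{-1}$); applying the same to $U^{-1}$ gives $\sigma(k)=k$. For $\alpha>1$ the analogous step is even more involved: one must build a union of $t=\sigma(k)$ maximal $\mathcal S_\beta$-sets whose $\sigma$-image fails to be an $\mathcal S_\alpha$-set, using the spreading property (and, for limit $\alpha$, the nesting $\mathcal S_{\beta_i}\subset\mathcal S_{\beta_n}$). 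None of this is present in your proposal, and it is the core of the lemma; without it the proof does not close. A secondary, smaller omission: eliminating form (i) of Lemma \ref{l3}(2) (the case $|v_{\theta^{-1}+1}|=1$ with many coordinates of modulus $\leq\theta$) requires the orthogonality relations $(U(e_k))(\sigma(i))=0$ for $k\neq i$ together with surjectivity of $\sigma$, which you gesture at but do not carry out.
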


\begin{proof}

\noindent\emph{Case 1}. Let $\alpha =1$.
 \item[] \emph{Step 1}. Fix $1\leq i \leq \theta^{-1}$.
 
    For any $y \in \mathbb{S}_{T[\theta, \mathcal{S}_1]}$ there exists $x \in \mathbb{S}_{T[\theta, \mathcal{S}_1]}$ such that $U(x)=y$. Since $U$ is isometry, so
    \begin{equation*}
        \|U(e_i) - y\| = \|U(e_i) - U(x)\| = \|e_i - x\|.
    \end{equation*}
    By Lemma \ref{l5} we obtain
     \begin{equation*}
        \|U(e_i) + y\| = \|-U(-e_i) + U(x)\| = \|e_i + x\|.
    \end{equation*}
    Hence by Lemma \ref{l3} (1)
     \begin{equation*}
        \min \{ \|U(e_i) + y\|, \|U(e_i) - y\| \} = \min \{ \|e_i + x\|, \|e_i - x\| \} \leq 1. 
    \end{equation*}
    Thus, again by Lemma \ref{l3} (1), for each  $i$ there is index $\pi(i) \in \big\{1, 2, \ldots ,{ \theta^{-1} }\big\}$ so that $U(e_i) = \pm e_{\pi(i)}$.  
    Note that 
    \begin{equation*}
        \qquad \quad \, 1= \| e_i \pm e_j \| = \big\|U(e_i) \pm U(e_j)\big\| = \|e_{\pi(i)} \pm e_{\pi(j)}\|   
    \end{equation*}
 for any $j\neq i$ in $\big\{1, 2, \ldots, { \theta^{-1}}\big\}.$ Therefore $\pi(j) \neq \pi(i)$ for  $j\neq i,$ so $\pi$ is the desired permutation. 

 \item[] \emph{Step 2}. Let $i >  \theta^{-1}$.  We will show that there is  $\varepsilon_i \in \{-1,1\}$ such that
    \begin{equation*}\label{e37}
    U(e_i) = \varepsilon_i e_{\sigma(i)},
  \end{equation*}
  for some permutation $\sigma$ of the set $\mathbb{N}\setminus\{1,2,\ldots, \theta^{-1} \}$.

Note that
\begin{equation*}
        \qquad \quad \, 1= \| e_i \pm e_j \| = \big\|U(e_i) \pm U(e_j)\big\| = \|U(e_i) \pm e_{\pi(j)}\|   
    \end{equation*}
 for any $j$ in $\big\{1, 2, \ldots, { \theta^{-1} }\big\},$ so $U(e_i) >  \theta^{-1}$. 
 Following the arguments in the proof of Lemma \ref{l4} it follows that for any $x \in \mathbb{S}_{T[\theta, \mathcal{S}_1]}$ holds
 \[
    \min \{ \|e_i - x\|, \|e_i + x\| \} \leq \theta + 1
 \]
 Since $U$ is surjective, so
 \[
    \min \{ \|U(e_i) - y\|, \|U(e_i) + y\| \} \leq \theta + 1
 \]
 for any $y \in \mathbb{S}_{T[\theta, \mathcal{S}_1]}$.
 
 By the Lemma \ref{l3} (b) there are $\sigma, \tilde{\sigma} \colon \mathbb{N}\setminus\{1,2,\ldots, \theta^{-1} \} \rightarrow \mathbb{N}\setminus\{1,2,\ldots, \theta^{-1} \}$ such that 
 \begin{equation}\label{e333}
    \big|\big(U(e_i)\big)(\sigma(i))\big| = 1 \quad \text{and} \quad \big|\big(U^{-1}(e_i)\big)(\tilde{\sigma}(i))\big| = 1,
  \end{equation}
 for all $i > \theta^{-1}$.
 We claim that for all $k,i> \theta^{-1}$ with $k \neq i$ we have 
 \begin{equation}\label{e34}
        \big(U(e_k)\big)(\sigma(i)) = 0 \quad \text{and} \quad \big(U^{-1}(e_k)\big)(\tilde{\sigma}(i)) = 0.
 \end{equation}
  Indeed, 
 \[
 1 = \|e_i \pm e_k\| = \|U(e_i) \pm U(e_k)\| \geq  \big|1 \pm \big(U(e_k)\big)(\sigma(i))\big|,
 \]
 for $k \neq i$ in $\mathbb{N}\setminus\{1,2,\ldots, \theta^{-1} \}$, and similarly for $U^{-1}$, so the conclusion follows. In particular $\sigma$ and $\tilde{\sigma}$ are injective.

  We will show that there exists $l > \theta^{-1}$ such that $\big|\big(U(e_l)\big)(\theta^{-1}+1)\big| = 1$. 
  
  If $\big|\big(U(e_{\theta^{-1}+1})\big)(\theta^{-1}+1)\big| = 1$ then the thesis is fulfilled, so suppose that this is not the case. Since $\theta^{-1}$ is an integer, so by Lemma \ref{l3} (b) we have $U(e_{\theta^{-1}+1}) = ae_{\theta^{-1}+1} + \varepsilon e_m$ for some $m > \theta^{-1}+1$, some $|a| \leq \theta$ and some $\varepsilon \in \{-1,1\}$. Then
 \begin{equation}\label{e33}
    1 > \| U(e_{\theta^{-1}+1}) - \varepsilon e_m \| = \| e_{\theta^{-1}+1} - \varepsilon \cdot U^{-1}(e_m) \|.
 \end{equation}
 Moreover, by \eqref{e333}, we have $|\big(U^{-1}(e_m)\big)(\tilde{\sigma}(m))|=1$. If $\tilde{\sigma}(m) > \theta^{-1}+1$ then 
 \[
 \| e_{\theta^{-1}+1} - \varepsilon \cdot U^{-1}(e_m) \| \geq \| e_{\theta^{-1}+1} - \varepsilon \cdot U^{-1}(e_m) \|_{\infty} \geq 1,
 \]
 so we obtain a contradiction with \eqref{e33}.
 
 This means that $\tilde{\sigma}(m)=\theta^{-1}+1$, \emph{i.e.}, $\big|\big(U^{-1}(e_m)\big)(\theta^{-1}+1)\big|=1$. 
 Hence from \eqref{e34} we have $\big(U^{-1}(e_{\theta^{-1}+1})\big)(\theta^{-1}+1) = 0$. 
 
 This together with Lemma \ref{l3} (b) yields $U^{-1}(e_{\theta^{-1}+1})=\tilde{\varepsilon} e_{\tilde{\sigma}(\theta^{-1}+1)}$ for some $\tilde{\varepsilon} \in \{-1,1\}$. 
 
 So $U(e_{\tilde{\sigma}(\theta^{-1}+1)}) = \tilde{\varepsilon} e_{\theta^{-1}+1}$, hence $\tilde{\sigma}(\theta^{-1}+1)$ is the $l$ we are looking for. 
 
 This together with \eqref{e34} gives us $\big(U(e_i)\big)(\theta^{-1}+1)=0$ for any $i \neq l$  with $i > \theta^{-1}$. By Lemma \ref{l3} (b) we obtain
 \begin{equation}\label{e35}
    U(e_i) = \varepsilon_i e_{\sigma(i)}
 \end{equation}
 for all $i \neq l$ with $i > \theta^{-1}$ and some $\{-1,1\}$-valued sequence $(\varepsilon_i)_{\substack{i=\theta^{-1}+1, \\ i \neq l}}^\infty$ Hence
 \[
    U^{-1}(e_{\sigma(i)}) = \varepsilon_i e_i
 \]
 for such $i$ and $(\varepsilon_i)_{\substack{i=\theta^{-1}+1, \\ i \neq l}}^\infty$ . This together with \eqref{e333} means that $\tilde{\sigma} = \sigma^{-1}$, so $\sigma$ is surjective. 
 Since for every $p \in \supp (U(e_l))\setminus \sigma(l)$ there exists $i \neq l$ with $\sigma(i) = p$, \emph{i.e.}, $\big|\big(U(e_i)\big)(p)\big| = 1$, by \eqref{e34} we have $\big(U(e_l)\big)(p)= 0$. Hence $U(e_l) = \varepsilon_l e_{\sigma(l)}$ for some $\varepsilon_l \in \{-1,1\}$. This together with  \eqref{e35} gives as conclusion. 

 \item[] \emph{Step 3}. We will show that $\sigma$ from \emph{Step 2} is an identity.
 
  Define 
\[
    x_k := k^{-1} \cdot \theta^{-1} \cdot (e_k, e_{k+1}, \ldots, e_{2k-1})
\]
for $k > \theta^{-1}$.
Then 
\[
    \|U(x_k) + U(e_i) \| = \| x_k + e_i \| = \|x_k - e_i \| = \| U(x_k) - U(e_i) \|
\]
for all $i \notin \supp x_k$. Hence, by the statement of \emph{Step 2} we have
\[
    \|U(x_k) + e_{\sigma(i)} \| = \| U(x_k) - e_{\sigma(i)}) \|
\]
for all $i \notin \supp x_k$.
Since $\|U(x_k)\|=1$ it must be $\big(U(x_k)\big)(\sigma(i)) = 0$ for all $i \notin \supp x_k$, so
\begin{equation}\label{nos}
    \supp U(x_k) \subseteq \{ \sigma(k), \sigma(k+1), \ldots, \sigma(2k-1) \}.
\end{equation}
We claim that $\sigma(k) \geq k$ for any $k>\theta^{-1}$.

Suppose that $\sigma(k) < k$. Then by \eqref{nos} there is $i \in \supp x_k$ such that
\[
    \big|\big(U(x_k)\big)(\sigma(i))\big| \geq (k-1)^{-1} \cdot \theta^{-1}. 
\]
Indeed, if not, then we obtain a contradiction, because
\[
    \| U(x_k) \| < \max \big\{ (k-1)^{-1} \cdot \theta^{-1},\, \theta \cdot (k-1) \cdot (k-1)^{-1} \cdot \theta^{-1} \big\} = 1.
\]
 Since $\sigma(\theta^{-1}+1) \geq \theta^{-1}+1$, assume firstly that $k \in \{\theta^{-1}+2, \theta^{-1}+3, \ldots, 2\theta^{-1}\}$. 
Then
\begin{equation*}
    \begin{split}
        1 + (k-1)^{-1} \cdot \theta^{-1} &\leq \big\| U(x_k) + \sgn \big( \big( U(x_k) \big) (\sigma(i)) \big) e_{\sigma(i)} \big\| \\
        &= \big\| x_k + \sgn \big( \big( U(x_k) \big) (\sigma(i)) \big) U^{-1}(e_{\sigma(i)}) \big\| \\
        &\leq \| x_k + e_{i} \| = \max \big\{ k^{-1} \cdot \theta^{-1} + 1, \theta \cdot \big(k^{-1} \cdot \theta^{-1} \cdot k + 1\big) \big\} \\
        &= 1 + k^{-1} \cdot \theta^{-1},
    \end{split}
\end{equation*}
which cannot hold. Hence for $\theta^{-1} < k \leq 2\theta^{-1}$ we have $\sigma(k) \geq k$. 

Assume that $k > 2\theta^{-1}$.  Since $x_k - e_i$ is a vector that has $k-1$ coordiantes equal to $k^{-1} \theta^{-1}$, one coordiante whose modulus is equal to $1-k^{-1} \theta^{-1}$ and the other coordinates equal to zero, we obtain
\begin{equation}\label{x}
    \begin{split}
    \big\| U(x_k) - \sgn \big( \big( U(x_k) \big) (\sigma(i)) \big) e_{\sigma(i)} \big\| &\geq \| x_k - e_i \| \\
    &= \max \big\{ 1- k^{-1} \cdot \theta^{-1},\, \theta \big( k^{-1} \cdot \theta^{-1}(k-2) +1 \big) \big\} \\
    &= 1 - 2k^{-1} + \theta > 1 = \| U(x_k) \|.
   \end{split}
\end{equation}
This means that $1-\big|\big( U(x_k) \big) (\sigma(i))\big| > \big|\big( U(x_k) \big) (\sigma(i))\big|$, so $1 - 2\big|\big( U(x_k) \big) (\sigma(i))\big|>0$.

For any finite set $E_j \subset \mathbb{N}$, where $j \in \mathbb{N}$ we have 
\begin{equation*}
    \begin{split}
       \big\| E_j \big( U(x_k) - \sgn \big( \big( U(x_k) \big) (\sigma(i)) \big) &e_{\sigma(i)} \big )\big\|\\ &\leq \big\| E_j \big( U(x_k) - 2 \big( U(x_k) \big) (\sigma(i)) e_{\sigma(i)} \big) \big\|\\ 
       &+ \big\| E_j \big( 2 \big( U(x_k) \big) (\sigma(i)) e_{\sigma(i)} - \sgn \big( \big( U(x_k) \big) (\sigma(i)) \big) e_{\sigma(i)} \big) \big\| \\
       &= \|E_jU(x_k)\|  \\
       &+ \big\| E_j \big( 2 \big( U(x_k) \big) (\sigma(i)) e_{\sigma(i)} - \sgn \big( \big( U(x_k) \big) (\sigma(i)) \big) e_{\sigma(i)} \big) \big\|, 
    \end{split}
\end{equation*}
The above equality holds because the corresponding vectors have equal coordinate moduli.
Multiplying by $\theta$ both sides of the above inequality and taking the supremum over all consecutive sets $d<E_1 < E_2 < \dots < E_{d}$ for some $d \in \mathbb{N}$, we obtain
\begin{equation*}
    \begin{split}
       \big\| \big( U(x_k) - \sgn \big( \big( U(x_k) \big) (\sigma(i)) \big)& e_{\sigma(i)} \big\|\\
       &\leq \|U(x_k)\| \\ &+ \theta \cdot \big\| E_{j_0} \big( 2 \big( U(x_k) \big) (\sigma(i)) e_{\sigma(i)} - \sgn \big( \big( U(x_k) \big) (\sigma(i)) \big) e_{\sigma(i)} \big) \big\| 
    \end{split}
\end{equation*}
for some $j_0 \in \{1,2,\ldots, d\}$. Hence
\begin{equation*}
    \begin{split}
       \big\| U(x_k) - \sgn \big( \big( U(x_k) \big) (\sigma(i)) \big) e_{\sigma(i)}\big\| &\leq \|U(x_k)\| + \theta\big(1- 2 \big|\big( U(x_k) \big) (\sigma(i))\big|\big) \\
       &= 1+ \theta - 2 \theta \big|\big( U(x_k) \big) (\sigma(i))\big| \\
       &\leq 1+ \theta  - 2 \theta (k-1)^{-1} \cdot \theta^{-1}.
    \end{split}
\end{equation*}
which contradicts \eqref{x}.

Doing the same for $U^{-1}$ instead of $U$ we obtain $\sigma^{-1}(\sigma(k)) \geq \sigma(k)$, so $k \geq \sigma (k)$, hence $\sigma(k) = k$ for all $k>\theta^{-1}$. This ends the proof for $\alpha = 1$.

\item[] \emph{Case 2}. Suppose that $\alpha= \beta +1$ for some $\beta < \omega_1$.

The proof that $U(e_1) = \varepsilon_1 e_1$, where $\varepsilon_1 \in  \{-1,1\}$ and for any $i > 1$ there is $\varepsilon_i \in \{-1,1\}$ such that
    \begin{equation*}\label{e377}
    U(e_i) = \varepsilon_i e_{\sigma(i)}
  \end{equation*}
  for some permutation $\sigma$ of set $\{2,3,\ldots\}$ is similar to the previous case and much simpler, so we omit it. We will show that $\sigma$ is an identity.

Fix $k > 1$ and suppose that $t:= \sigma(k) < k$.

Note that every $\mathcal S_{\alpha}$-set whose minimum is $k$ is the union of at most $k$ many $\mathcal S_{\beta}$-sets, so the idea of the proof of this case is to choose the indices $ j_1<j_2<\cdots<j_m,$ for some $m \in \mathbb{N},$ so that they creates $k$ many consecutive $\mathcal S_{\beta}$-sets. At the same time, we must ensure that the set \[
\{ \sigma(j_1), \sigma(j_2), \ldots, \sigma(j_m)  \} 
\]
associated with these indices is not $\mathcal S_{\alpha}$-set. We proceed as follows.
Choose indices $j_1 = k, j_1^1 > \max \big\{ k, \theta^{-1}\big\}$. Then take the next index $j_2^1 > \max \big\{ j_1^1, \sigma(j_1^1)\big\}$,
in sequence $j_3^1 > \max \big\{ j_2^1, \sigma(j_2^1)\big\}$ and so on. 

Following this procedure, we may choose $S^1_{\beta} = \{ j_1^1, j_2^1, \ldots, j_{m_1}^1 \}$ which is a~maximal $\mathcal S_{\beta}$-set. At the same time, we get the indices 
 \[
\{ \sigma(j_1^1), \sigma(j_2^1), \ldots, \sigma(j_{m_1}^1)  \} 
\]
so that 
\begin{equation*}
    \begin{split}
         \sigma(j_1^1) &< j_2^1 \leq \max \{ \, j_2^1, \sigma(j_2^1) \, \} < j_3^1
         \leq \cdots \\& \cdots \leq \max \{ \, j_{m_1-1}^1, \sigma(j_{m_1-1}^1) \, \} < j_{m_1}^1.
    \end{split}
\end{equation*}

We choose the remaining maximal $\mathcal S_{\beta}$-sets according to the following procedure. If $S^{n}_{\beta} = \{ j^{n}_1, j^{n}_2, \ldots, j_{m_n}^n \}$ for some $n<t$, then choose $S^{n+1}_{\beta} = \{ j^{n+1}_1, j^{n+1}_2, \ldots, j_{m_{n+1}}^{n+1} \}$, where $j_1^{n+1} > \max \{\, j_{m_n}^n, \sigma(j_{m_n}^n) \,\}$ and $j_p^{n+1} > \max \{\, j_{p-1}^{n+1}, \sigma(j_{p-1}^{n+1}) \,\}$ for $p \leq m_{n+1}$.

We finally arrive at indices 
\[
j_1^1<j_2^1 < \cdots <j_{m_t}^t,
\]
that form a union of $t$ maximal $\mathcal S_{\beta}$-sets, so we got the conclusion because we may choose $\mathcal S_{\beta}$-sets

\begin{equation}\label{sets}
        k \leq S^0_{\beta} < S^1_{\beta} < \dots < S^{t}_{\beta},
\end{equation}  
    where
    \begin{itemize}
        \item  
    $
    S^0_{\beta} = \{ j_1 \},
    $
        \item  
    $
    S^1_{\beta} = \{ j_1^1, j_2^1, \ldots, j_{m_1}^1 \},
    $
        \item
    $
    \vdots
    $  
        \item  
    $
    S^{t}_{\beta} = \{ j^{t}_1, j^{t}_2, \ldots, j_{m_t}^t \}.
    $
    \end{itemize}

    By the above construction, 
     \[
    \tilde{S}_m := \big\{\sigma(j_1),\sigma(j_1^1),\sigma(j_2^1), \ldots \sigma(j_{m_t}^t) \big\},
    \]
    where $m$ is the cardinality of set $\tilde{S}_m$, is not $\mathcal S_{\alpha}$-set because the Schreier family (of order $\beta$) is spreading (see Definition \ref{sp}). Indeed, suppose $\tilde{S}_m \in \mathcal{S}_{\alpha}$, then $\tilde{S}_m$ is the union of at most t-many successive $\mathcal S_{\beta}$-sets. Take any index $j > \max \{ j_{m_t}^t, \sigma(j_{m_t}^t) \}$. Then $j_1^1<j_2^1 < \cdots <j_{m_{t}}^t < j$ is the union of at most t-many successive $\mathcal S_{\beta}$-sets by the spreading property of $S_{\beta}$. This contradicts the choice of $j_1^1<j_2^1 < \cdots <j_{m_{t}}^t$ as the
    union of t-many maximal $S_{\beta}$-sets.

Then we define
\[
    x_k := \theta^{-1} \cdot m^{-1} \cdot \sum_{i =1}^m e_{j_i}.
\]
As in \eqref{nos} we have
\begin{equation*}
    \supp U(x_k) \subseteq \tilde{S}_m.
\end{equation*}
To complete the proof, it is enough to replace each $k$ with $m$ in \emph{Step 3} of \emph{Case 1}. Note that 
\[
    \big|\big(U(x_k)\big)(\sigma(i))\big| \geq (m-1)^{-1} \cdot \theta^{-1} 
\]
holds for some $i \in \{j_1,j_2, \ldots, j_m\}$ as we ensured that $m > \theta^{-1} +1$.

\item[] \emph{Case 3}: Suppose that $\alpha$ is a limit ordinal.

We proceed as in Case $2$ for $\alpha= \alpha_{t},$ where $(\alpha_i)_{i=1}^{\infty}$ is a fixed strictly increasing sequence of successor ordinals converging to $\alpha$ with  $\mathcal S_{\beta_i} \subset \mathcal S_{\beta_n}$ for $i \leq n$, where $\alpha_n := \beta_{n} +1$ for each $n \in \mathbb{N}$, choosing suitable sequence $(j_i)_{i=1}^m$.
        Indeed, $\mathcal S_{\beta_t}$-sets $k\leq S^0_{\beta_{t}} < S^1_{\beta_t} < \dots < S^{t}_{\beta_{t}}$, where
        \begin{itemize}
        \item  
    $
    S^0_{\beta_{t}} = \{ j_1 \},
    $
        \item  
    $
    S^1_{\beta_{t}} = \{ j_1^1, j_2^1, \ldots, j_{m_1}^1 \},
    $
        \item
    $
    \vdots
    $  
        \item  
    $
    S^{t}_{\beta_{t}} = \{ j^{t}_1, j^{t}_2, \ldots, j_{m_t}^t \}.
    $
    \end{itemize}
    give rise to an $\mathcal S_{\alpha}$-set (even an $\mathcal S_{\alpha_{t}}$-set). Moreover, the set 
      \[
    \tilde{S}_m := \big\{\sigma(j_1),\sigma(j_1^1),\sigma(j_2^1), \ldots \sigma(j_{m_t}^t) \big\}
    \]
    is not $\mathcal S_{\alpha_{t}}$-set by the spreading property of $S_{\beta_n}$ (as explained in Case 2). Hence $\tilde{S}_m \notin \mathcal S_{\alpha}$ as we ensured that $\mathcal S_{\beta_i} \subset \mathcal S_{\beta_n}$ for $i \leq n$. Indeed, suppose $\tilde{S}_m \in \mathcal S_{\alpha}$. Then $\tilde{S}_m \in \mathcal S_{\alpha_j}$ for some $j \leq t$, i.e. $\tilde{S}_m$ is the union of at most $j$-many successive $\mathcal S_{\beta_{j}}$-sets, i.e. $\mathcal S_{\beta_t}$-sets by the assumption on $(\beta_i)_{i}$. This means that $\tilde{S}_m \in \mathcal S_{{\alpha}_t}$; a contradiction.
\end{proof}

\begin{lemma}\label{l8}
    If $y = \sum_{i=1}^{\infty} b_i e_i \in \mathbb{S}_{T[\theta, \mathcal{S}_{\alpha}]}$, then $x = \sum_{i=1, i \neq j}^{\infty}  \theta b_i e_i \pm e_j \in \mathbb{S}_{T[\theta, \mathcal{S}_{\alpha}]}$. Moreover the norm of vector $z = y - \sum_{i=1, i \neq j}^{\infty}  \theta b_i e_i - \sgn(b_j) e_j$ is equal to $1+|b_j|$.
\end{lemma}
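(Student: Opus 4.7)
The plan is to prove the two assertions separately, in both cases via the implicit formula \eqref{im} combined with $1$-unconditionality of the basis.

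For $\|x\|=1$, I would first observe that the $e_j$-coefficient of $x$ has modulus $1$ while every other coefficient has modulus at most $\theta \leq \tfrac12$, so $\|x\|_\infty = 1$ and hence $\|x\| \geq 1$. For the reverse inequality, set $y' := y - b_j e_j$ so that $\|y'\| \leq \|y\| = 1$, and fix any admissible family $E_1 < \cdots < E_d$ with $\{\min E_i\}_{i=1}^d \in \mathcal{S}_\alpha$. I would then split on whether $j$ lies in the union of the $E_i$: if not, $E_i x = \theta E_i y'$ and the admissibility bound gives $\theta \sum_i \|E_i x\| = \theta^2 \sum_i \|E_i y'\| \leq \theta \|y'\| \leq \theta$; if $j \in E_{i_0}$, the triangle inequality applied to $E_{i_0} x = \theta E_{i_0} y' \pm e_j$ gives $\|E_{i_0} x\| \leq \theta\|E_{i_0} y'\|+1$, and summing yields $\theta \sum_i \|E_i x\| \leq \theta + \theta\|y'\| \leq 2\theta \leq 1$. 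Either way $\|x\|_T \leq 1$, so $\|x\|=1$.

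For $\|z\| = 1+|b_j|$, the key step is the algebraic rewriting
\[
z \;=\; \sgn(b_j)\bigl(1+|b_j|\bigr)\,e_j \;+\; (1-\theta)\,y',
\]
where the $\sgn(b_j) e_j$ term is read with the sign that adds constructively to the $b_j e_j$ contribution from $y$ (equivalently, $z = y - x$ for $x$ the sphere element from the first part with $\pm = -\sgn(b_j)$). From this identity the lower bound is immediate: $\|z\| \geq \|z\|_\infty \geq 1+|b_j|$, and in fact $\|z\|_\infty = 1+|b_j|$ because $(1-\theta)\|y'\|_\infty \leq 1-\theta < 1 \leq 1+|b_j|$. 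For the matching upper bound on $\|z\|_T$ I would again split on whether $j$ lies in some $E_{i_0}$: if $j$ is absent, $\theta \sum_i \|E_i z\| = (1-\theta)\,\theta \sum_i \|E_i y'\| \leq (1-\theta)\|y'\| < 1+|b_j|$; if $j \in E_{i_0}$, then $\|E_{i_0} z\| \leq (1+|b_j|) + (1-\theta)\|E_{i_0} y'\|$ and $\|E_i z\| = (1-\theta)\|E_i y'\|$ for $i \neq i_0$, giving
\[
\theta \sum_i \|E_i z\| \;\leq\; \theta(1+|b_j|) + (1-\theta)\|y'\| \;\leq\; 1+\theta|b_j| \;\leq\; 1+|b_j|.
\]

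No feature of $\mathcal{S}_\alpha$ is used beyond the admissibility bound $\theta \sum_i \|E_i v\| \leq \|v\|$, so the argument runs uniformly in $\alpha$. The one delicate point, and the main thing requiring care, is the sign bookkeeping in $z$: one must read the $\sgn(b_j) e_j$ contribution with the sign that adds to rather than cancels the $b_j e_j$ already present in $y$, so that the entire norm $1+|b_j|$ is carried by the single coordinate at index $j$; once this is pinned down, all estimates reduce to the triangle inequality and the implicit formula.
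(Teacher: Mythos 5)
Your proof is correct and follows essentially the same route as the paper: bound the supremum norm and the admissible-sum term separately via the triangle inequality and the bound $\theta\sum_i\|E_i v\|\leq\|v\|$, arriving at $2\theta\leq 1$ for $x$ and $1+\theta|b_j|\leq 1+|b_j|$ for $z$. Your reading of the sign in $z$ (so that the $j$-th coordinate has modulus $1+|b_j|$, i.e.\ $z=y-x$ with the choice $\pm=-\sgn(b_j)$) is exactly the one the paper's own proof uses.
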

\begin{proof} 
        Let us note that
        \[
        x = (\theta b_1, \theta b_2, \ldots, \theta b_{j-1}, \pm 1, \theta b_{j+1} \ldots).
        \]
        The supremum norm is obviously not greater than 1, because $\|y\|=1$.
        Take any sets $d \leq E_1 < E_2<\cdots < E_d$. We may assume that $j \in E_{i_0}$ for some $i_0 \in \{1,2,\ldots,d\}$. Indeed, if this is not the case, we will not get a norm of vector $x$ greater than $1$, because $\|y\|=1$.

    Since 
    \begin{equation*}
    \begin{split}
        \theta \sum_{i=1}^{d} \|E_i x \| 
        &\leq  \theta \sum_{i=1}^{d} ( \|E_i (\theta y) \| + \|E_i e_j \| ) \\
        &= \theta \sum_{i=1}^{d} \theta \|E_i y \| + \theta \|E_{i_0} e_j \| \\
        &\leq \theta ( \theta \cdot \theta^{-1} + 1 ) = 2\theta \leq 1,
    \end{split}
    \end{equation*} 
    so $\|x\| \leq 1$ as desired.
    The vector $z$ is of the form
     \[
        z = \big( (1 - \theta) b_1,\, (1 - \theta) b_2, \ldots, (1 - \theta) b_{j-1},\, \pm (1 + |b_j|),\, (1 - \theta) b_{j+1}, \ldots \big).
     \]
     The supremum norm is $1 + |b_j|$. As before, we show that the norm given by some sets $d \leq E_1 < E_2<\cdots < E_d$ is not larger. Indeed,
      \begin{equation*}
    \begin{split}
        \theta \sum_{i=1}^{d} \|E_i z \| 
        &\leq  \theta \sum_{i=1}^{d} \big( (1 - \theta) \|E_i  y \| + (1 + |b_j|) \|E_i e_j \| \big) \\
        &\leq \theta \big( (1 - \theta) \theta^{-1} + 1 + |b_j| \big) \\
        &= 1 + \theta |b_j| < 1 + |b_j|.
    \end{split}
    \end{equation*}    
\end{proof}

We are now ready to prove Theorem~\ref{Th:B}.
\begin{proof}
    Fix $\alpha=1$. Let $\theta^{-1} \geq 2$ be an integer. By Lemma \ref{l7} isometry $U$ is of the form
     \[
        U e_i = \left\{\begin{array}{ll} \hat{\varepsilon}_i e_{\pi(i)}, &  1\leq i\leq \theta^{-1} \\
        \hat{\varepsilon}_i e_i, & i >  \theta^{-1} 
        \end{array} \right. \quad(i\in \mathbb N),
    \]
    where $(\hat{\varepsilon}_i)_{i=1}^\infty$ is some $\{-1,1\}$-valued sequence and $\pi$ is a permutation of $\big\{1,2,\ldots, \theta^{-1}  \big\}.$ 
    
    Define $\hat{\pi}(i)$ as $\pi(i)$ for $1\leq i\leq \theta^{-1}  $ and $\hat{\pi}(i) = i$ for $i > \theta^{-1} $.  For $i \in \mathbb{N}$ let us set $\varepsilon_i := \big(U(e_i)\big)\big({\hat{\pi}(i)}\big)$.
    
    Fix $x = \sum_{i=1}^{\infty} a_i e_i \in \mathbb{S}_{T[\theta, \mathcal{S}_1]}$  and take $y = \sum_{i=1}^{\infty} b_i e_i \in \mathbb{S}_{T[\theta, \mathcal{S}_1]}$ such that $U(x)=y$. 
    If $a_i$ is nonzero and $b_i = 0$ then we use the convention that $\sgn(b_i) = 1$.
    Fix $j \in \mathbb{N}$ and take
    \[
    y_j = \sum_{i=1, i \neq \hat{\pi}(j)}^{\infty} \theta b_i e_i - \varepsilon_j \sgn(a_j) e_{\hat{\pi}(j)}
    \]
    and 
    \[
    z_j = \sum_{i=1, i \neq \hat{\pi}(j)}^{\infty} \theta b_i e_i - \sgn(b_{\hat{\pi}(j)}) e_{\hat{\pi}(j)}.
    \]
    By Lemma \ref{l8} we have $\|y_j\|=1$ and $\|y-z_j\|=1+|b_{\hat{\pi}(j)}|$. 
    Let $x_j \in \mathbb{S}_{T[\theta, \mathcal{S}_1]}$ be such that $U(x_j) = y_j$. We obtain 
    \[
    \|x_j - \sgn(a_j) e_j \| = \|U(x_j) - \sgn(a_j) U(e_j) \| = \|y_j - \sgn(a_j) \varepsilon_j e_{\hat{\pi}(j)} \| = 2.
    \]
    So by Lemma \ref{l4} we have $x_j(j) = - \sgn(a_j)$.
    This yields 
    \[
    1 + |b_{\hat{\pi}(j)}| = \| y - z_j \| \geq \| y - y_j \| = \| U(x) - U(x_j) \| = \| x - x_j \| \geq 1 + |a_j|.
    \]
    Hence 
    \begin{equation}\label{e1}
        |b_{\hat{\pi}(j)}| \geq |a_j|.
    \end{equation}
    Note that $\varepsilon_i = \big( U^{-1}(e_{\hat{\pi}(i)})\big)(i)$ and $U^{-1} (e_i) = \varepsilon_{\hat{\pi}^{-1}(i)} e_{\hat{\pi}^{-1}(i)}$. Similarly, we define
     \[
    u_j = \sum_{i=1, i \neq \hat{\pi}^{-1}(j)}^{\infty} \theta a_i e_i - \varepsilon_{\hat{\pi}^{-1}(j)} \sgn(b_j) e_{\hat{\pi}^{-1}(j)}
    \]
    and 
    \[
    v_j = \sum_{i=1, i \neq \hat{\pi}^{-1}(j)}^{\infty} \theta a_i e_i - \sgn(a_{\hat{\pi}^{-1}(j)}) e_{\hat{\pi}^{-1}(j)}.
    \]
    Then
    \[
    \|U(u_j) - \sgn (b_j) e_j \| = \| u_j - \sgn(b_j) \varepsilon_{\hat{\pi}^{-1}(j)} e_{\hat{\pi}^{-1}(j)} \| = 2.
    \]
    So $\big(U(u_j)\big)(j) = - \sgn(b_j)$.
    Hence 
    \[
    1 + |a_{\hat{\pi}^{-1}(j)}| = \| x - v_j \| \geq \| x - u_j \| = \| U(x) - U(u_j) \| = \| y - U(u_j) \| \geq 1 + |b_j|.
    \]
    This means that $|a_{\hat{\pi}^{-1}(j)}| \geq |b_j|$, which together with \eqref{e1} gives us $|a_{\hat{\pi}^{-1}(j)}| = |b_j|$. We moreover have $\| x - v_j \| = \| x - u_j \|$, so $\varepsilon_{\hat{\pi}^{-1}(j)} \sgn(b_j) = \sgn(a_{\hat{\pi}^{-1}(j)})$ and finally $b_{\hat{\pi}(j)} = \varepsilon_j a_j $ for $j \in \mathbb{N}$, hence the conclusion follows.

     Fix  $1<\alpha< \omega_1$ and let $Ue_i = \hat{\varepsilon}_i e_i$, where $(\hat{\varepsilon}_i)_{i=1}^\infty$ is some $\{-1,1\}$-valued sequence. The proof is exactly the same if we define $\hat{\pi}(i)$ as identity for any $i \in \mathbb{N}$.
\end{proof}

\bibliography{bibliography.bib}

\begin{thebibliography}{10}

\bibitem{alspach1992complexity}
Dale~E. Alspach and Spiros Argyros.
\newblock Complexity of weakly null sequences.
\newblock {\em Dissertationes Math. (Rozprawy Mat.)}, 321:44, 1992.

\bibitem{antunes2022surjective}
L.~Antunes and K.~Beanland.
\newblock Surjective isometries on {Banach} sequence spaces: A survey.
\newblock {\em Concrete Operators}, 9(1):19--40, 2022.

\bibitem{todorcevic2005ramsey}
Spiros~A. Argyros and Stevo Todor\v{c}evi\'c.
\newblock {\em Ramsey Methods in Analysis}.
\newblock Adv. Courses in Math. -- CRM Barc. Basel: Birkh\"auser, 2005.

\bibitem{banakh2022every}
Taras Banakh.
\newblock Every 2-dimensional banach space has the {M}azur--{U}lam property.
\newblock {\em Linear Algebra and its Applications}, 632:268--280, 2022.

\bibitem{causey2017concerning}
Ryan~M Causey.
\newblock Concerning the szlenk index.
\newblock {\em Studia Mathematica}, 236:201--244, 2017.

\bibitem{cueto2022exploring}
Mar{\'\i}a Cueto-Avellaneda, Daisuke Hirota, Takeshi Miura, and Antonio~M
  Peralta.
\newblock Exploring new solutions to {T}ingley’s problem for function
  algebras.
\newblock {\em Quaestiones Mathematicae}, pages 1--32, 2022.

\bibitem{ding2004representation}
Guang~Gui Ding.
\newblock The representation theorem of onto isometric mappings between two
  unit spheres of {{$\ell^1(\Gamma)$}}-type spaces and the application to the
  isometric extension problem.
\newblock {\em Acta Mathematica Sinica}, 20(6):1089--1094, 2004.

\bibitem{ding2007isometric}
Guang-Gui Ding.
\newblock The isometric extension of the into mapping from a
  {{$L^{\infty}(\Gamma)$}}-type space to some {B}anach space.
\newblock {\em Illinois Journal of Mathematics}, 51(2):445--453, 2007.

\bibitem{ding20021}
Guanggui Ding.
\newblock The 1-{L}ipschitz mapping between the unit spheres of two {H}ilbert
  spaces can be extended to a real linear isometry of the whole space.
\newblock {\em Science in China Series A: Mathematics}, 45:479--483, 2002.

\bibitem{ding2003isometric}
Guanggui Ding.
\newblock The isometric extension problem in the unit spheres of
  {{$\ell^p(\Gamma)$}} {{$(p > 1)$}} type spaces.
\newblock {\em Science in China Series A: Mathematics}, 46:333--338, 2003.

\bibitem{dding2004representation}
Guanggui Ding.
\newblock The representation theorem of onto isometric mappings between two
  unit spheres of {{$\ell^{\infty}$}}-type spaces and the application on
  isometric extension problem.
\newblock {\em Science in China Series A: Mathematics}, 47:722--729, 2004.

\bibitem{ding2008isometric}
GuangGui Ding.
\newblock The isometric extension of “into” mappings on unit spheres of
  {AL}-spaces.
\newblock {\em Science in China Series A: Mathematics}, 51(10):1904--1918,
  2008.

\bibitem{ding2009isometric}
GuangGui Ding.
\newblock On isometric extension problem between two unit spheres.
\newblock {\em Science in China Series A: Mathematics}, 52(10):2069--2083,
  2009.

\bibitem{fang2006extension}
Xi~Nian Fang and Jian~Hua Wang.
\newblock Extension of isometries between the unit spheres of normed space {E}
  and {{$C(\Omega)$}}.
\newblock {\em Acta Mathematica Sinica. English Series}, 22(6):1819, 2006.

\bibitem{jian2004extension}
Wang Jian.
\newblock On extension of isometries between unit spheres of {{$AL_p$}}- spaces
  {{$(0<p<\infty)$}}.
\newblock {\em Proceedings of the American Mathematical Society},
  132(10):2899--2909, 2004.

\bibitem{liu2007extension}
Rui Liu.
\newblock On extension of isometries between unit spheres of
  {{$L^{\infty}(\Gamma)$}}-type space and a banach space {E}.
\newblock {\em Journal of mathematical analysis and applications},
  333(2):959--970, 2007.

\bibitem{liu2009extension}
Rui Liu and Lun Zhang.
\newblock On extension of isometries and approximate isometries between unit
  spheres.
\newblock {\em Journal of mathematical analysis and applications},
  352(2):749--761, 2009.

\bibitem{maslany2023isometries}
Natalia Ma{\'s}lany.
\newblock {{I}sometries of combinatorial {T}sirelson spaces}.
\newblock {\em Proc. Amer. Math. Soc.}, 151:4475--4484, 2023.

\bibitem{peralta2018survey}
Antonio~M Peralta.
\newblock A survey on tingley’s problem for operator algebras.
\newblock {\em Acta Scientiarum Mathematicarum}, 84:81--123, 2018.

\bibitem{tan2009nonexpansive}
Dong-Ni Tan.
\newblock Nonexpansive mappings on the unit spheres of some banach spaces.
\newblock {\em Bulletin of the Australian Mathematical Society},
  80(1):139--146, 2009.

\bibitem{tan2012extension}
Dong~Ni Tan.
\newblock Extension of isometries on the unit sphere of $l^p$ spaces.
\newblock {\em Acta Mathematica Sinica, English Series}, 28:1197--1208, 2012.

\bibitem{tan2012isometries}
Dong-Ni Tan.
\newblock Isometries of the unit spheres of the {T}sirelson space {$T$} and the
  modified {T}sirelson space {$T_M$}.
\newblock {\em Houston J. Math}, 2012.

\bibitem{tingley1987isometries}
Daryl Tingley.
\newblock Isometries of the unit sphere.
\newblock {\em Geometriae Dedicata}, 22(3):371--378, 1987.

\bibitem{tsirelson1974impossible}
B.~Tsirelson.
\newblock It is impossible to embed $\ell_p$ or $c_0$ into an arbitrary
  {Banach} space ({Russian}).
\newblock {\em {Funkts. Anal. i Prilozhen} English translation: {Funct. Anal.
  Appl}}, 8:138--141, 1974.

\bibitem{yang2006extension}
Xiuzhong Yang.
\newblock On extension of isometries between unit spheres of {{$L_p(\mu)$}} and
  {{$L_p(\nu, H)$ $(1< p\neq 2)$}}, {H} is a {H}ilbert space).
\newblock {\em Journal of mathematical analysis and applications},
  323(2):985--992, 2006.

\bibitem{yang2014extension}
Xiuzhong Yang and Xiaopeng Zhao.
\newblock On the extension problems of isometric and nonexpansive mappings.
\newblock In {\em Mathematics Without Boundaries: Surveys in Pure Mathematics},
  pages 725--748. Springer, 2014.

\end{thebibliography}
\bibliographystyle{plain}

\end{document}